\documentclass[12pt, a4paper]{article}
\usepackage[utf8]{inputenc}
\usepackage[T1]{fontenc}
\usepackage{amssymb,amsmath,amsrefs,amsthm} % Mathematical typesetting
\usepackage{graphicx}                   % Figures
\usepackage[a4paper,left=2.5cm,right=2.5cm,top=2.5cm,bottom=2.5cm]{geometry}                   % Margins
\usepackage{hyperref}                   % Hyperlinks
\usepackage{booktabs}                   % Professional tables
\usepackage{caption}
\usepackage{subcaption}
\usepackage{float}
\usepackage{tikz}                       % Drawing diagrams
\usepackage{siunitx}                    % Number alignment
\usepackage{xcolor}
\providecommand{\keywords}[1]{\textbf{Keywords:} #1}
\usepackage{tablefootnote}
\usepackage{enumitem}
\usepackage{booktabs}
\usepackage{multirow}
\usepackage{mathrsfs}
\usepackage{changes}
\usepackage[left]{lineno}
\hypersetup{colorlinks=true,
            linkcolor=blue,
            anchorcolor=blue,
            citecolor=blue}
\usetikzlibrary{arrows.meta, positioning, shapes, shadows, calc, decorations.pathreplacing}

\usepackage{chngcntr}
\counterwithin{figure}{section}
\newtheorem{theorem}{Theorem}[section]

\newtheorem{lemma}[theorem]{Lemma}

\newtheorem{remark}[theorem]{Remark}

\title{Concave Kite Central Configurations in the Planar Four-Body Problem with Three Equal Masses}
\author{Yangshanshan Liu\\
Chern Institute of Mathematics, Nankai University \\
Tianjin, 300071, China\\ % lyss6133@nankai.edu.cn\\
Nishan Poudel, Rupak Raut, Zhifu Xie\\
School of Mathematics and Natural Science\\
The University of Southern Mississippi\\
Hattiesburg, MS 39406, USA %, Zhifu.Xie@usm.edu\\ %<Nishan.Poudel@usm.edu>%<Rupak.Raut@usm.edu>}
}

\date{\today}
\graphicspath{{3+1figures/}}
\begin{document}
%\linenumbers
\modulolinenumbers[5] 
\maketitle

\begin{abstract}
We present a complete classification of concave kite central configurations in the planar 4-body problem with three equal masses. 
There are two different types of central configurations when the fourth mass lies inside or outside the triangle formed by the other three.
Using a rigorous computer-assisted analytical method and a fixed coordinate system, we show that the central configurations in each case form a one-parameter family and obtain a complete classification of these configurations. 
In addition, we rigorously show the existence and types of the bifurcation points in the reduced space. We also provide two numerical global bifurcation pictures in the entire planar 4-body configuration space as the mass ratio varies from $0$ to $+\infty$, including symmetric and asymmetric concave central configurations with three equal masses. 
\end{abstract}
\keywords{central configurations, concave central configurations, Newtonian 4-body problem, interval arithmetic, Krawczyk operator, bifurcations}
\\ MSC: 70F10, 70F15, 37M20
\tableofcontents

\section{Introduction}

The Newtonian $n$-body problem seeks to describe the dynamics of $n$ point positive masses $m_i$ with positions $q_i=(q_{i1},\cdots,q_{id})^T \in \mathbb{R}^{d}\,(i=1,\cdots,n;\, d=2,3)$ moving under their mutual gravitational attraction
\begin{equation}\label{newton}
	m_i\ddot q_{i} =\frac{\partial U}{\partial q_{i}}=\sum \limits_{j\neq i\atop j=1}^n \frac{m_im_j(q_j-q_i)}{\vert q_j-q_i \vert^3},\quad i=1,\cdots,n,
\end{equation}
where 
\begin{equation*}
	U=\sum\limits_{i<j}\frac{m_im_j}{\vert q_j-q_i \vert}=\sum\limits_{i<j}\frac{m_im_j}{r_{ij}}
\end{equation*}
is the Newtonian potential and $r_{ij}=|q_i-q_j|$ is the mutual distance between any two bodies. 

The simplest planar solutions for \eqref{newton} are called \textit{homographic solutions}, in which each body describes a Keplerian orbit with respect to the center of mass $c=\frac{1}{M}\sum_{i=1}^nm_iq_i$ with $M=\sum _{i=1}^nm_i$ the total mass. The initial configuration of the bodies is invariant up to translation, scaling, and rotation. This special configuration $q = (q_1, \dots, q_n) \in \mathbb{R}^{d\times n}$ is called a \textit{central configuration}, satisfying that the acceleration vector of each body is proportional to its position vector with respect to $c$. 
Mathematically, this condition is satisfied if there exists a multiplier $\lambda>0$ such that
\begin{equation} \label{eq:cc_def}
    -\lambda m_{i}(q_i - c) = \sum_{j \neq i} \frac{m_i m_j (q_j - q_i)}{|q_i - q_j|^3} \quad i=1,\dots,n.
\end{equation}
Central configurations are of fundamental importance in celestial mechanics. 
Besides constructing homographic solutions, central configurations are also deeply related to the topology of the integral manifolds, and the dynamical behavior of the $n$-body system near collision. 

Solving the central configuration equations \eqref{eq:cc_def} is a challenging task, dating back to Euler and Lagrange, who found all five central configurations of the 3-body problem. 
A natural question for central configurations was considered by  Chazy \cite{chazy1918}, Winter \cite{wintner1941} in the early 1900s, and revived by Smale \cites{smale1970,smale1998}: \textit{The number of equivalence classes of central configurations of the $n$-body problem is finite for any given $n$ positive masses.} For the case $n=4$, Hampton and Moeckel \cite{hampton2006} gave a positive answer in 2006. 
For $n=5$, Hampton and Jensen \cite{hampton2011}, Albouy and Kaloshin \cite{albouy2012}, derived generic results for the spatial and the planar case, respectively. Chang and Chen \cite{chang2024} discussed the $n=6$ case, and there is still some distance to go before obtaining general finiteness. Much less is known for the case $n\geq7$. More evidence comes from the study of the special cases, such as the $n$-body collinear central configurations \cite{moulton1910}, and the equal mass cases for $n$ not very large \cites{albouy1996,lee2009,moczurad2019}. 

Significant progress has been made for the 4-body central configurations, especially the convex shapes. 
Given any four ordered masses, MacMillan and Bartky \cite{macmillan1932} showed the existence of a convex central configuration, and Xia \cite{xia2004} later gave a simpler proof. The uniqueness part, highlighted by Sim\'o and Yoccoz, remains open to a rigorous mathematical explanation. Special convex central configurations have been widely discussed \cites{cors2012, xie2012, santoprete2021a, santoprete2021b, roberts2025}, and the complete classification is well established \cite{corbera2019}, all of which provide positive evidence for this conjecture. 
Recently, Sun, Xie, and You \cite{sun2023} applied a computer-assisted approach to this conjecture and made some progress. 

For the concave case, Hampton \cite{hampton2002} gave the existence theorem in his thesis. Concave cases involving equal masses and symmetries have been discussed or mentioned in \cites{long2003,shi2010,bernat2009,martha2013, liu2026}, and the limit cases when several masses tend to zero have been discussed in \cites{hall1988,xia1991,casasayas1994,corbera2014,moeckel1997}. 
Since degenerate central configurations may arise in these cases, the bifurcation study is also considered in \cites{rusu2016, corbera2015, liu2026, meyer1988, simo1978, bernat2009, leandro2003}.

A \textit{kite} central configuration, or shortly, a kite, possesses a symmetric axis passing through two of the four masses. It may be convex or concave. 
Roberts \cite{roberts2025} showed the existence and the uniqueness of the convex kite, as well as the discussion on its linear stability and the bifurcation aspect.
The kite shape with two couples of equal masses has been fully discussed. A convex central configuration possessing two opposite equal masses must be a kite \cites{albouy2008,long2002}, or more precisely, a rhombus, according to the study by Bernat, Llibre, and P\'{e}rez-Chavela. Liu and Xie \cite{liu2026} found that the concave kite forms a one-parameter family, which provides an explicit picture of the full classification for any given non-negative mass ratio, incorporating the related results from \cites{martha2013,corbera2014}. In addition, the bifurcation of this concave kite type has been discussed in \cites{rusu2016,liu2026}.

In this paper, we focus on the concave kite central configurations in the 4-body problem when three of the four masses are equal. From \cite{bernat2009} we know that for a concave kite with three equal masses, the fourth mass should lie on the symmetry axis, and this leads to two distinct positions for $m_4$, corresponding to two different equation systems, as shown in Figures \ref{concaveouter} and \ref{concaveinner}. 
Unless otherwise specified, we set $m_1=m_2=m_3=1$ and $m_4=\mu$ during the discussion. 
We say that a 4-body concave kite central configuration with three equal masses is an \textbf{\textit{outer case}} if $m_4$ lies outside the triangle formed by the other three; it is an \textbf{\textit{inner case}} if $m_4$ lies inside the triangle. With this terminology, the previous results on this type can be clearly summarized.
Bernat et al. \cite{bernat2009} provided a numerical discussion of the classification, including both convex and concave cases. Shi and Xie \cite{shi2010} found the one-parameter property for the inner case. 
\begin{figure}[htbp]
        \begin{minipage}[t]{0.4\textwidth}
        \centering
        \includegraphics[width=\textwidth]{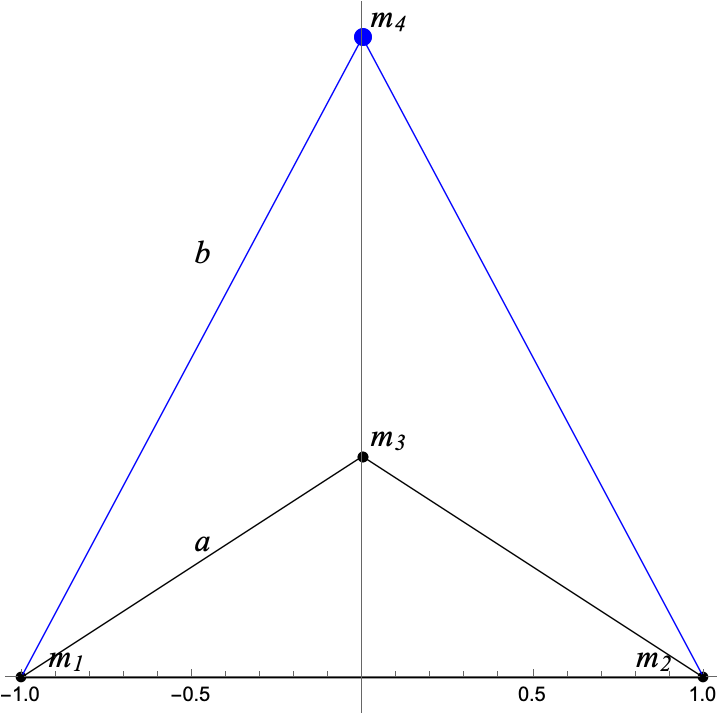}
        \caption{The outer case when $m_4$ lies outside the triangle formed by $m_1,m_2$ and $m_3$.}
        \label{concaveouter}
    \end{minipage}
      \hfill
    \begin{minipage}[t]{0.4\textwidth}
        \centering
       \includegraphics[width=\textwidth]{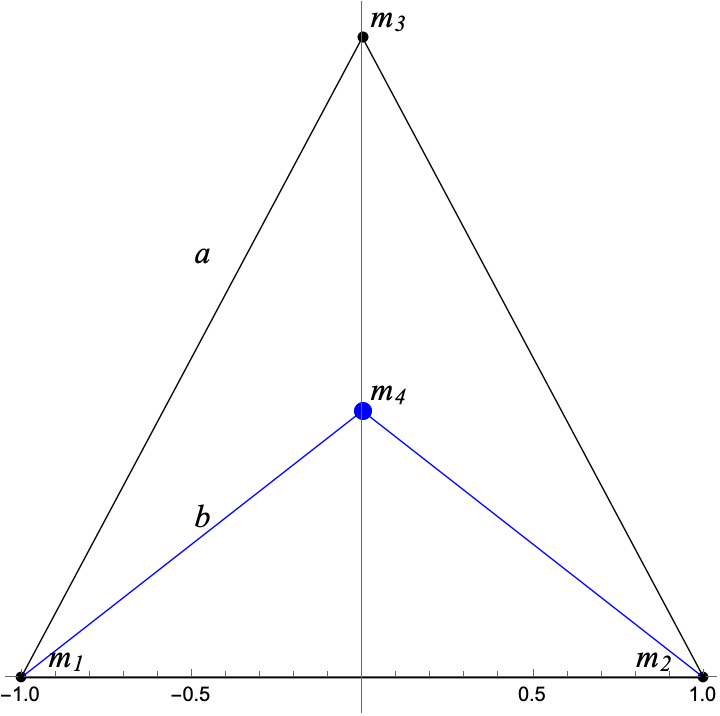}
        \caption{The inner case when $m_4$ lies inside the triangle formed by $m_1,m_2$ and $m_3$.}
        \label{concaveinner}
    \end{minipage}
\end{figure}
Compared with previous work, we use a rigorous, computer-assisted approach based on the interval method and the Krawczyk operator to show that the one-parameter property holds in both the outer and inner cases. This leads to a clear and complete classification for the concave kite central configurations with three equal masses for any given $\mu\geq0$. 

Our results are summarized in the following
\begin{theorem}\label{th1}
For the planar 4-body concave kite central configurations with three equal masses, the outer case admits a one-parameter family. In the inner case, besides the equilateral triangle family with three equal masses at the vertices and an arbitrary mass at the center, there exists a one-parameter isosceles triangle family with three equal masses at the vertices and an arbitrary mass on the symmetric axis.
\end{theorem}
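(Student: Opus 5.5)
We fix coordinates adapted to the symmetry. By \cite{bernat2009}, $m_4=\mu$ lies on the symmetry axis. We place $m_1=(0,1)$ on the axis (the $y$-axis), $m_2=(-x,y)$ and $m_3=(x,y)$ symmetric about it, and $m_4=(0,z)$. Here $x>0$ and $y,z$ are real parameters. The outer case corresponds to $z$ on the far side of the segment $m_2m_3$ from $m_1$, and the inner case to $z$ strictly between $y$ and $1$.

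By symmetry, the central configuration equations \eqref{eq:cc_def} reduce to a small polynomial-type system. It consists of the $x$-component equation for $m_3$ and the $y$-component equations for $m_1$, $m_3$, $m_4$. These are linear in $\lambda$ and $\mu$, so I will eliminate both $\lambda$ and $\mu$. Equivalently, I will use the Dziobek/Albouy–Chenciner form: the equations $(r_{ij}^{-3}-\lambda')\Delta_i\Delta_j = $ const, and the relation $\mu=\mu(x,y,z)$ obtained from one equation. The result is a single equation $F(x,y,z)=0$ together with the explicit $\mu=\mu(x,y,z)$. After scaling we use the translation and scale freedom, so that the solution set is a curve in a two-dimensional reduced space, say in the variables $(x,z)$ after fixing $y$ through the center-of-mass and normalization.

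The claim is that in each case this zero set with $\mu>0$ is a single connected one-parameter curve. In the inner case, the exceptional equilateral family is an additional line: $m_1m_2m_3$ equilateral with $m_4$ at the centroid is a central configuration for every $\mu$. I will factor this out explicitly. It shows up as the component $\{x=\sqrt3(1-y)/\ldots,\ z=\text{centroid}\}$, which I will divide off or treat separately. The remaining isosceles branch is then the one-parameter family.

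The proof of the one-parameter property proceeds in three steps. First, I will show by elementary estimates that the solutions lie in a compact region of the reduced parameter space, away from the collision limits. The exception is the limit ends of the family ($\mu\to0$ and $\mu\to\infty$), and I will analyze these asymptotically, using the known restricted $3+1$ results of \cite{hall1988,corbera2014} for $\mu\to0$. Second, on the compact region I will parametrize by one coordinate $t$, for example the apex angle or $x$. For each $t$ in a fine partition of the interval, I will use interval arithmetic and the Krawczyk operator to prove two things: existence and uniqueness of a zero of $F(t,\cdot)=0$ in an explicit box, and nonexistence of zeros in the complementary boxes, by excluding $0$ from the interval enclosure of $F$. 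Uniqueness of the root in each slab gives a smooth graph over $t$, hence a one-parameter family. Third, I will verify that $\mu>0$ along the family, again by interval enclosure, and glue the slabs by continuity.

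The main obstacle is the neighborhoods of the endpoints and of the bifurcation points. Near the bifurcation points the branch meets the equilateral family (inner case), or the curve turns so that $t$ is no longer a good parameter. There the Krawczyk operator fails because the Jacobian degenerates. At those points I will switch the parametrizing variable, or handle them analytically: I will factor out the equilateral component, locate the crossing point exactly, and use a local implicit-function/Taylor argument to show that exactly one branch passes through. The asymptotic regions near the boundary of the compact region will be handled by explicit inequalities rather than by interval computation.
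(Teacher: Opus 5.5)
Your overall architecture matches the paper's: eliminate $\lambda$ and $\mu$ to get one mass-free equation and an explicit $\mu$, confine to a bounded region, and show the zero set is a graph over one shape variable. There is, however, a concrete setup error. With $m_1=(0,1)$ on the axis and $m_{2,3}=(\mp x,y)$, putting $m_4$ ``on the far side of the segment $m_2m_3$ from $m_1$'' makes the diagonal $m_1m_4$ cross $m_2m_3$ at its midpoint. That configuration is a convex kite. The concave outer case is the one where the on-axis equal mass lies inside the triangle formed by the two off-axis masses and $m_4$. So $m_4$ sits on the axis beyond $m_1$, on the same side of the line $m_2m_3$. As stated, your outer-case analysis is about the wrong family.

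Second, you place the main difficulty at the bifurcation points, and this hides the idea that actually carries the proof. In the paper's variables ($q_{1,2}=(\mp1,0)$, $a=r_{13}$, $b=r_{14}$), eliminating $\lambda$ leaves $\mu=x/y_1$ (with $x,y_1$ as in \eqref{xx1yy1}) and the mass-free equation $g(a,b)=0$. One then proves analytically that $\partial g_{outer}/\partial b>0$ whenever $a<b$, $b>\sqrt2$ (Lemma \ref{partialgb}), and $\partial g_{inner}/\partial a<0$ whenever $1<b<a$ (inequality \eqref{ginnerabtoa}). The paper combines this with the sign of $g$ at the two ends of the admissible interval (Lemmas \ref{outerlemma} and \ref{innerlemma}). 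Those ends are the collision limit $b\to a^+$ (resp.\ $a\to b^+$) and the values at $\sqrt2$ and $\sqrt2+\sqrt6$. The intermediate value theorem then gives exactly one root for every $a\in(1,2)$, resp.\ every $b\in(\beta_3,\beta_4)$. So the families are global smooth graphs $b=b_{outer}(a)$ and $a=a_{inner}(b)$, with no slab decomposition needed.

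In particular, the shape curve never degenerates. The fold $\mathcal F$ is merely a critical point of $a\mapsto\hat\mu_{outer}(a)$. At $\mathcal T$, the equilateral family is a single point $(2,2/\sqrt3)$ of the reduced shape space, where $x=y_1=0$ so $\mu$ is free, and it lies on the smooth isosceles graph. There is nothing to divide off, and no reason for a Krawczyk test in the shape variables to fail there; these points matter only for Theorem \ref{th2}.

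Likewise, the ends of the families need no restricted-problem asymptotics. They are just endpoints of bounded intervals where $x\to0$ or $y_1\to0$. (Also, the $\mu\to0$ end is the restricted problem with three equal primaries; Hall's $1+n$ results concern $\mu\to\infty$.)

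A slab-by-slab verification could replace the monotonicity lemma, but only if $t$ is carried as an interval in a parametric Krawczyk test; checks at sample values of $t$ prove nothing in between. Even then, the exclusion near the collision boundary, where $g$ is unbounded, would still require exactly the analytic limit argument the paper uses.
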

Based on Theorem \ref{th1} and the results in \cites{bernat2009,shi2010}, we obtain the classification results, in which the mass ratio $\mu$ can be extended to $0$ and $+\infty$.
\begin{theorem}\label{th2}
Supposing that a concave kite central configuration satisfies $m_1=m_2=m_3=1$ and $m_4=\mu$. Then the following hold: 
\begin{enumerate}
	\item For the outer case, 
	\begin{enumerate}[label=1.\arabic*)]
		\item there are precisely two central configurations if $\mu\in (0,m_{O})$;
		\item there is only one central configuration if $\mu=m_{O}$;
		\item there is no such kite central configuration if $\mu>m_{O}$,
    \end{enumerate}
where $m_{O}\approx1.0026605475726100006858035$.
	\item For the inner case,
	\begin{enumerate}[label=2.\arabic*)]
		\item there are exactly two central configurations when $\mu\in (0,m_I)\cup (m_I,+\infty)$ corresponding to the equilateral and the isosceles families respectively; 
		\item there is only one central configuration if $\mu=m_I$, which is the intersection of the above two families, possessing an equilateral triangle shape;
	\end{enumerate}
	where $m_I=\frac{64 \sqrt{3}+81}{249}$.
	\item In addition, if $\mu=0$ is admissible, 
	  \begin{enumerate}[label=3.\arabic*)]
	    \item there are exactly two outer cases: one with $m_3$ positioned between $m_1$ and $m_2$, and the other one with these three masses forming an equilateral triangle, as shown in blue in Figures \ref{a=1} and \ref{a=2};
	    \item there are exactly two inner cases, where $m_1,m_2,m_3$ form an equilateral triangle as shown in red in Figure \ref{a=2} .
      \end{enumerate}
\end{enumerate}
\end{theorem}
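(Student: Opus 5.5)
The plan is to read Theorem \ref{th2} off the one-parameter families of Theorem \ref{th1} by studying the mass function $\mu$ along each family. Fix coordinates: $m_1=(-1,0)$, $m_2=(1,0)$, and $m_3=(0,y_3)$, $m_4=(0,y_4)$ on the symmetry axis. By symmetry the central configuration equations \eqref{eq:cc_def} reduce to a few scalar equations in $(y_3,y_4,\lambda,\mu)$. After eliminating $\lambda$, they become two equations, and $\mu$ appears linearly in them. By Theorem \ref{th1}, the solution set in each case is a curve. I will parametrize it by a geometric parameter, say $t=y_3$ or the ratio of the distances $r_{34}$ and $r_{13}$. I will write $\mu=\mu(t)$ explicitly as a rational function of the distances by solving the linear relation for $\mu$. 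Counting central configurations for a given $\mu$ then means counting solutions of $\mu(t)=\mu$ on the admissible parameter interval, where admissible means positive mass and the correct concave geometry.

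\emph{Outer case.} The family starts at $\mu=0$ at two endpoints. One is the collinear configuration with $m_3$ between $m_1$ and $m_2$. The other is the equilateral triangle with an infinitesimal $m_4$ outside it. These two endpoints give item 3.1. I will show that along the family, $\mu(t)$ increases from $0$ to a unique interior maximum $m_O$ and then decreases back to $0$. To do this, I will show that $\mu'(t)$ has exactly one zero, and that this zero is nondegenerate ($\mu''\neq 0$). This is a fold bifurcation point. Away from the zero I will verify the sign of $\mu'$ with interval arithmetic on subintervals. Near the endpoints, where the equations become singular, I will use asymptotic expansions. At the zero itself, I will isolate the root with the Krawczyk operator on the system $\{F=0,\ \partial_t\mu=0\}$. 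This yields $m_O$ to the stated precision and proves uniqueness in a small box. Items 1.1–1.3 then follow from the monotonicity on each side of the maximum.

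\emph{Inner case.} The equilateral family exists for every $\mu$. On the isosceles family I will show that $\mu(t)$ is strictly monotone and runs from $0$ to $+\infty$. So each $\mu>0$ gives exactly one isosceles configuration. It coincides with the equilateral one exactly where the two families cross. I find the crossing point by linearizing the equations at the equilateral-with-center configuration and requiring the symmetric perturbation to be degenerate. This is an explicit algebraic computation, and solving it gives $m_I=\frac{64\sqrt3+81}{249}$. Monotonicity is again checked rigorously by interval subdivision plus the endpoint asymptotics. At $\mu=0$ both limits are equilateral triangles, one with $m_4$ at the center and one with $m_4$ off-center on the axis; this gives item 3.2.

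The main obstacle is the behaviour near the boundary of parameter space, namely $\mu\to0$, $\mu\to\infty$, and near-collisions. There interval arithmetic fails, because the expressions are of the form $0/0$ or unbounded. I will handle these regions by explicit Taylor or Puiseux expansions of $\mu(t)$ with rigorous remainder bounds. These expansions must overlap the compact region where the interval computation applies. The key requirement is that the sign of $\mu'$ holds uniformly across the overlap.
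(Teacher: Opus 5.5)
Your overall route is the paper's. You solve the mass-free equation $g=0$ as a curve and write $\mu=x/y_1$ along it. In the outer case you prove unimodality, with the unique critical point (the fold at $m_O$) certified by Krawczyk. In the inner case you prove monotonicity on the isosceles branch. You get $m_I$ from degeneracy of the symmetric linearization at the centered equilateral configuration, which is the paper's condition ${\rm d}j/{\rm d}b|_{r_I}=0$. Your explicit endpoint expansions are, if anything, more careful than the paper's limit computations plus a single sign evaluation. There is, however, one place where the plan as written fails.

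\textbf{The gap: an interior removable singularity on the inner isosceles branch.} The branch runs from $b=\beta_3$ (where $\mu=0$) to $b=\beta_4$ (where $\mu\to+\infty$). It passes through the crossing point $r_I=(2,2/\sqrt{3})$ in its \emph{interior}, and there $x$ and $y_1$ both vanish:
\begin{itemize}
\item $x=0$ because $a=2$;
\item $y_1=0$ because $m_4$ at the center is equidistant from all three vertices, i.e.\ $\sqrt{a^2-1}-\sqrt{b^2-1}=b$.
\end{itemize}
So the ``explicit'' $\mu(t)$ you get by solving the linear relation has a removable $0/0$ singularity at an interior parameter value $t_I$. On any box containing $t_I$ the enclosure of the denominator contains $0$. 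On nearby boxes the quotient form makes the enclosures blow up, so subdivision cannot certify the sign of $\mu'$ on a neighborhood of $t_I$. Your asymptotic analysis is reserved for $\mu\to0$, $\mu\to\infty$ and near-collisions, so strict monotonicity across $t_I$ is not covered. Without it, neither ``exactly two'' for $\mu\neq m_I$ nor ``exactly one'' at $\mu=m_I$ follows.

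\textbf{How to close it.} You need a desingularization step at this interior point. The paper does it as follows:
\begin{itemize}
\item It works with the regular numerator $\tilde f$ of $\frac{{\rm d}\hat\mu_{inner}}{{\rm d}b}\,\partial_a g_{inner}$ and observes $\tilde f(r_I)=0$.
\item Let $\tilde h$ be the numerator of the derivative of $\tilde f$ along the curve. By Krawczyk, $r_I$ is the only common zero of $\tilde h$ and $\tilde g$.
\item It checks the sign of $\tilde h$ on either side (at $b=1.1$ and $b=1.3$), so $r_I$ is the maximum of $\tilde f$ along the branch and $\tilde f<0$ elsewhere.
\item Continuity of $\hat\mu_{inner}$ at $r_I$, with value $m_I$, then gives monotonicity through the crossing.
\end{itemize}
Your own tool would also work here: a rigorous Taylor expansion of numerator and denominator about $t_I$. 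Either way, it must be applied at $t_I$, not only at the ends.

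\textbf{Choice of parameter.} Do not take $t=y_3$ on the inner branch. There $a=a_{inner}(b)$ equals $2$ both at $b=\beta_3$ and at $b=2/\sqrt3$. Table~\ref{innervalues} shows this: $y_3$ starts at $\sqrt3$, drops to about $1.58$, and returns to $\sqrt3$. So the branch is not a graph over $y_3$. Use $y_4$ (the paper's $b$), which is legitimate because $\partial g_{inner}/\partial a<0$.
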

\begin{remark}
We point out here that the result concerning the inner case in Theorem \ref{th1} has already been discussed in \cite[Theorem 1.4]{shi2010}, and the difference lies in the choice of variables.
	The result in Theorem \ref{th2} can be reorganized in Table \ref{numberofcc}.
\begin{table}[htbp]
\centering
\begin{tabular}{c|ccccccc}
\toprule
$\mu$ & $0$ & $(0,m_I)$& $m_I$ &$(m_I,m_O)$ & $m_O$& $(m_O,+\infty)$&$+\infty$ \\
\midrule
Outer & 2& 2& 2& 2&\textcolor{blue}{1} &$-$ &$ -$\\
Inner & 2& 2& \textcolor{blue}{1}& 2&2 &2 & 2\\
\midrule
Total &4 & 4& 3& 4& 3&2 &2 \\
\bottomrule
\end{tabular}
\caption{With $m_1=m_2=m_3=1$, $m_4=\mu$ and $q_1=(-1,0),q_2=(1,0)$, we count the number of the concave kite central configurations with three equal masses.}
\label{numberofcc}
\end{table}
\end{remark}

%The threshold value $m_O$ was found in \cite{bernat2009}, and $m_I$ by Palmore \cite{palmore1975b}.

%%% bifurcation introduction
We further explore the bifurcations in the reduced symmetric subspace for the outer and the inner cases. To make it clear, we use the notations in Section \ref{secbif}. 
Using a computer-assisted approach, we rigorously show that in the outer case there is a unique fold bifurcation point $\mathcal{F}$ when $\mu=m_O$, and it is the one already found in \cite{bernat2009} (with notation $m^\ast$). 
For the inner case, there is a unique transcritical bifurcation point $\mathcal{T}$ when $\mu=m_I$, which is the famous one found by Palmore \cite{palmore1975b}. 

Based on the foregoing analysis and our numerical computations, we present two global bifurcation images covering the entire 4‑body planar configuration space. These correspond to the outer and inner cases, respectively, and account for both symmetric and asymmetric concave central configurations with three equal masses. The images are shown in Figures \ref{outerbif} and \ref{innerbif}, with the numerical values listed in Tables \ref{outervalues} and \ref{innervalues}. 
In the outer case, there exists a supercritical pitchfork bifurcation point $\mathcal{P}$ found in \cite{rusu2016} when $\mu=m_{P}\approx0.99184227439094091554349$ (or $m_{\ast\ast}$ in their notation), giving rise to the asymmetric concave central configurations when $\mu>m_{P}$. Furthermore, when $\mu\to +\infty$, numerical evidence shows that the limiting position for the asymmetric central configuration forms an equilateral triangle, with two of the equal masses colliding with each other, as shown in Figure \ref{outerbif}. 
In fact, the limiting case can be considered equivalently as the relative equilibria of the $(1+3)$-body problem when the only large mass $m_4$ is set to be 1, and the other three equal masses tend to zero. 
%A more detailed asymptotic process of this setting is shown in Figure \ref{minfinite}. 
%\deleted{Interestingly, 
%the limiting collision case indicated in Figure \ref{outerbif} is different from those non-collision ones discussed in \cite{corbera2015,hall1988,casasayas1994}. We will elaborate further in Section \ref{secbif}.}

The whole paper is organized as follows.
In Section \ref{3eq}, we introduce the central configuration equations for both outer and inner cases, and determine the proper ranges for the corresponding variables in Lemma \ref{rangeab}. In Sections \ref{outersec} and \ref{innersec}, we deal with both cases, respectively, including the proofs of Theorems \ref{th1} and \ref{th2}. At the end of each section, we study the bifurcation in the reduced symmetric subspace. In Section \ref{secbif}, we give a numerical analysis of the bifurcations in the whole planar 4-body configuration space.
We present the necessary theories used in this paper in Section \ref{sec2} for convenience, including interval arithmetic, the Krawczyk operator, and the bifurcation theory used in this context.

\section{Equations}\label{3eq}
We consider a planar configuration with an axis of symmetry along the $y$-axis. The masses are set to be $m_1 = m_2 = m_3 = 1$ and $m_4 = \mu$. The positions of the particles are parameterized as:
\begin{equation}\label{q1234}
    q_1 = (-1, 0), \quad q_2 = (1, 0), \quad q_3 = (0, \sqrt{a^2-1}), \quad q_4 = (0, \sqrt{b^2-1}),
\end{equation}
where $a,b\geq1$.
By substituting the above into \eqref{eq:cc_def} and simplifying, we derive four equations
\begin{subequations}
	\begin{align}
		%-\lambda +\frac{m}{b^3}+\frac{1}{a^3}+\frac{1}{4}&=0,\\ 
		\lambda -\frac{\mu}{b^3}-\frac{1}{a^3}-\frac{1}{4}&=0,\notag\\
		-\frac{\lambda  (\mu \sqrt{b^2-1}+\sqrt{a^2-1})}{\mu+3}+\frac{\mu \sqrt{b^2-1}}{b^3}+\frac{\sqrt{a^2-1}}{a^3}&=0,\label{origin2}\\
		%-\frac{\lambda  (m \sqrt{b^2-1}+\sqrt{a^2-1})}{m+3}+\frac{m \sqrt{b^2-1}}{b^3}+\frac{\sqrt{a^2-1}}{a^3}&=0,\\
		\lambda  \left(\sqrt{a^2-1}-\frac{\mu \sqrt{b^2-1}+\sqrt{a^2-1}}{\mu+3}\right)-\frac{\mu(\sqrt{a^2-1}-\sqrt{b^2-1})}{\vert \sqrt{a^2-1}-\sqrt{b^2-1}\vert^3}-\frac{2 \sqrt{a^2-1}}{a^3}&=0,\label{origin3}\\
		-\frac{\lambda  (\sqrt{a^2-1}-3 \sqrt{b^2-1})}{\mu+3}+\frac{\sqrt{a^2-1}-\sqrt{b^2-1}}{\vert \sqrt{a^2-1}-\sqrt{b^2-1}\vert^3}-\frac{2 \sqrt{b^2-1}}{b^3}&=0\label{origin4}.
	\end{align}
\end{subequations}
The relationship for the left-hand sides of \eqref{origin2},\eqref{origin3} and \eqref{origin4} is 
$$2\cdot\eqref{origin2}+\eqref{origin3}+\mu\cdot\eqref{origin4}=0.$$ 
Then we reduce the system to
\begin{equation*}
	\left\{
	\begin{aligned}
		\lambda -\frac{\mu}{b^3}-\frac{1}{a^3}-\frac{1}{4}&=0,\\
\frac{3 \sqrt{a^2-1}}{a^3}+\frac{\mu \sqrt{b^2-1}}{b^3}+\frac{\mu(\sqrt{a^2-1}-\sqrt{b^2-1})}{\vert \sqrt{a^2-1}-\sqrt{b^2-1}\vert^3}-\lambda  \sqrt{a^2-1}&=0,\\
\frac{\sqrt{a^2-1}}{a^3}+\frac{(\mu+2) \sqrt{b^2-1}}{b^3}-\frac{\sqrt{a^2-1}-\sqrt{b^2-1}}{\vert \sqrt{a^2-1}-\sqrt{b^2-1}\vert^3}-\lambda  \sqrt{b^2-1}&=0.
	\end{aligned}
	\right.
\end{equation*}
By substituting the expression of $\lambda$ from the first equation above into the last two, we get the following after some simplification
\begin{subequations}
	\begin{align}
		\lambda -\frac{\mu}{b^3}-\frac{1}{a^3}-\frac{1}{4}&=0,\label{cc1}\\
		\mu \left(\frac{\sqrt{a^2-1}-\sqrt{b^2-1}}{b^3}-\frac{\sqrt{a^2-1}-\sqrt{b^2-1}}{\vert \sqrt{a^2-1}-\sqrt{b^2-1}\vert^3}\right)-\frac{2 \sqrt{a^2-1}}{a^3}+\frac{\sqrt{a^2-1}}{4}&=0,\label{cc2}\\
\frac{\sqrt{a^2-1}-\sqrt{b^2-1}}{a^3}-\frac{\sqrt{a^2-1}-\sqrt{b^2-1}}{\vert \sqrt{a^2-1}-\sqrt{b^2-1}\vert^3}+\frac{2\sqrt{b^2-1}}{b^3}-\frac{\sqrt{b^2-1}}{4}&=0.\label{cc3}
	\end{align}
\end{subequations}
We consider the following expressions
\begin{equation}\label{xx1yy1}
	\left\{
	\begin{aligned}
		x=& \frac{\sqrt{a^2-1}(8-a^3)}{4a^3}\\%=\frac{8-a^3}{4a^3} \sqrt{a^2-1}\\
		x_1=&(\sqrt{a^2-1}-\sqrt{b^2-1})\frac{\vert \sqrt{a^2-1}-\sqrt{b^2-1}\vert^3-a^3}{a^3\vert \sqrt{a^2-1}-\sqrt{b^2-1}\vert^3}\\%=(\sqrt{a^2-1}-\sqrt{b^2-1})\frac{a^3+(\sqrt{a^2-1}-\sqrt{b^2-1})^3}{a^3(\sqrt{a^2-1}-\sqrt{b^2-1})^2}\\
		y=& \frac{\sqrt{b^2-1}(8-b^3)}{4b^3}\\%=\frac{8-b^3}{4b^3} \sqrt{b^2-1}\\
		y_1=&(\sqrt{a^2-1}-\sqrt{b^2-1})\frac{\vert \sqrt{a^2-1}-\sqrt{b^2-1}\vert ^3-b^3}{b^3\vert \sqrt{a^2-1}-\sqrt{b^2-1}\vert^3}.%=(\sqrt{a^2-1}-\sqrt{b^2-1})\frac{b^3+(\sqrt{a^2-1}-\sqrt{b^2-1})^3}{b^3\left(\sqrt{a^2-1}-\sqrt{b^2-1}\right)^2}.
	\end{aligned}
	\right.
\end{equation}
Then from \eqref{cc2}, \eqref{cc3}, and \eqref{xx1yy1} we obtain a mass expression $\mu=\mu(a,b)$ and a mass-free equation $g(a,b)=0$
\begin{subequations}
	\begin{align}
		\mu =& \mu(a,b)=\frac{x}{y_1}\label{mab}=\frac{(2-a)}{(\vert \sqrt{a^2-1}-\sqrt{b^2-1}\vert-b)(\sqrt{a^2-1}-\sqrt{b^2-1})}\cdot\check \mu>0,\\
		g = &g(a,b)= x_1+y\notag\\
		=&(\sqrt{a^2-1}-\sqrt{b^2-1})\cdot(\vert \sqrt{a^2-1}-\sqrt{b^2-1}\vert-a)\cdot \check g_1+(2-b)\cdot \check g_2=0\label{gab},
	\end{align} 
\end{subequations}
where we find the following are all positive
\begin{equation*}
\left\{
	\begin{aligned}
		\check \mu=&\frac{b^3\sqrt{a^2-1}\cdot\vert\sqrt{a^2-1}-\sqrt{b^2-1} \vert^3(a^2+2a+4)}{\vert\sqrt{a^2-1}-\sqrt{b^2-1} \vert^2+b\vert\sqrt{a^2-1}-\sqrt{b^2-1} \vert+b^2}\cdot\frac{1}{4a^3}>0,\\
		\check g_1=&\frac{\vert\sqrt{a^2-1}-\sqrt{b^2-1} \vert^2+a\vert\sqrt{a^2-1}-\sqrt{b^2-1} \vert+a^2}{a^3\vert\sqrt{a^2-1}-\sqrt{b^2-1} \vert^3}>0,\\
		\check g_2=&\frac{(b^2+2b+4)\sqrt{b^2-1}}{4b^3}>0.
	\end{aligned}
\right.
\end{equation*}
It is easy to see that the sign of $\sqrt{a^2-1}-\sqrt{b^2-1}$ determines two different equation systems for this concave shape.
When $a<b$, we get the outer case, and when $a>b$, we get the inner case. The corresponding expressions are denoted as follows
\begin{equation}\label{outereqs}
\left\{
\begin{aligned}
	\mu_{outer}(a,b) = &\mu(a,b)\\
	g_{outer} (a,b)= &g(a,b)
	\end{aligned}
\right.,\quad a<b
\end{equation}
\begin{equation}\label{innereqs}
\left\{
\begin{aligned}
	\mu_{inner}(a,b) = &\mu(a,b)\\
	g_{inner}(a,b)= &g(a,b)
	\end{aligned}
\right.,\quad a>b.
\end{equation}
For convenience, we denote by $h_{inner}(a,b)=\sqrt{a^2-1}-\sqrt{b^2-1}-b.$

\begin{lemma}\label{rangeab}
For a concave 4-body kite central configuration with three equal masses $m_1=m_2=m_3=1$ and $m_4=\mu$, and the positions in \eqref{q1234}, we have the following
\begin{enumerate}
	\item For the outer case, $(a,b)\in D_{outer}= D_{outer_1}\cup\{(2/\sqrt{3},2)\} \cup D_{outer_2}$ holds for $\mu>0$. More precisely, we have 
		\begin{enumerate}[label=1.\arabic*)]
			\item $D_{outer_1}=(1,2/\sqrt{3})\times (\sqrt{2},2)$;
			\item $D_{outer_2}=(2/\sqrt{3},2)\times (2,\sqrt{2}+\sqrt{6})$;
			\item when $(a,b)=(2/\sqrt{3},2)$, i.e., $m_1,m_2,m_4$ form an equilteral triangle with $m_3$ at the center. This holds for $\mu=1$. 
		\end{enumerate}
	\item For the inner case, $(a,b)\in D_{inner}= D_{inner_1} \cup \{(2,2/\sqrt{3})\}\cup D_{inner_2}$ holds for $\mu>0$. More precisely, we have
	\begin{enumerate}[label=2.\arabic*)]
		\item \label{2.1}$D_{inner_1}=(\sqrt{2},2)\times (\beta_3,2/\sqrt{3})$, where $\beta_3\approx1.004931366268962$ is the smaller root of the only two positive roots of
	 \begin{equation}\label{ginner2b}
 	g_{inner}(2,b)=\frac{3 \sqrt{b^2-1}}{8}+\frac{1}{\left(\sqrt{3}-\sqrt{b^2-1}\right)^2}-\frac{2 \sqrt{b^2-1}}{b^3}-\frac{\sqrt{3}}{8}=0
 \end{equation}
for $b\in (1,2)$;
		\item $D_{inner_2}=(2,\alpha_4)\times (2/\sqrt{3},\beta_4)$, where $(\alpha_4,\beta_4)$ is one of the intersections of $h_{inner}(a,b)=0$ and $g_{inner}(a,b)=0$ with $\alpha_4\approx2.839911839069655$ and $\beta_4\approx1.5171222826401447$;
		\item when $(a,b)=(2,2/\sqrt{3})$, i.e., $m_1,m_2,m_3$ form an equilateral triangle with $m_4$ at the center. This holds for arbitrary $\mu>0$. 
	\end{enumerate} 
	\item In addition, for the limiting case $\mu=0$, there are exactly four such concave central configurations(see Figure \ref{a=1} and \ref{a=2}). Two for the outer case: $(a,b)\in \{(1,\beta_1),(2,\beta_2)\}$, and two for the inner case:$(a,b)\in \{(2,\beta_3),(2,2/\sqrt{3})\}$, where 
\begin{enumerate}[label=3.\arabic*)]
\item $\beta_1\approx 1.5160134167700623$ is the only positive real root of 
\begin{equation}\label{gouter1b}
	g_{outer}(1,b)=\frac{5 \sqrt{b^2-1}}{4}+\frac{1}{1-b^2}-\frac{2 \sqrt{b^2-1}}{b^3}=0
\end{equation}
for $b\in (1,2)$ with $m_3$ at the middle of $m_1$ and $m_2$;
\item $\beta_2\approx 3.102903134537836$ is the only positive root of 
\begin{equation}\label{gouter2b}
	g_{outer}(2,b)=\frac{3 \sqrt{b^2-1}}{8}-\frac{1}{\left(\sqrt{3}-\sqrt{b^2-1}\right)^2}-\frac{2 \sqrt{b^2-1}}{b^3}-\frac{\sqrt{3}}{8}=0
\end{equation}
for $b\in (2,\sqrt{2}+\sqrt{6})$;
\item $2/\sqrt{3}$ is the larger root of the only two positive roots of $g_{inner}(2,b)=0$.
\end{enumerate}
\end{enumerate}
\end{lemma}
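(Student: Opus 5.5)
The plan is to read off the admissible region from the sign constraints imposed by $\mu(a,b)>0$ in \eqref{mab} together with the mass-free equation $g(a,b)=0$ in \eqref{gab}, using that $\check\mu,\check g_1,\check g_2>0$. Write $d=\sqrt{a^2-1}-\sqrt{b^2-1}$.

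\emph{Outer case} ($a<b$, $d<0$). Here $|d|=-d$, and \eqref{mab} gives $\operatorname{sign}\mu=\operatorname{sign}\bigl((2-a)/((-d-b)d)\bigr)$. Since $-d-b=\sqrt{b^2-1}-b-\sqrt{a^2-1}<0$ and $d<0$, the denominator is positive, so $\mu>0$ forces $a<2$ (the value $a=2$ gives $\mu=0$). In \eqref{gab} the first term is $d(-d-a)\check g_1$, and $-d-a=\sqrt{b^2-1}-\sqrt{a^2-1}-a$. The equation $g=0$ then says that $d(-d-a)$ and $(2-b)$ have opposite signs, so we split according to the sign of $-d-a$.
\begin{itemize}
\item If $-d-a<0$, then $d(-d-a)>0$, so we need $b>2$. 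The curve $\sqrt{b^2-1}=\sqrt{a^2-1}+a$ at $a=2$ gives $b^2-1=(\sqrt3+2)^2$, i.e.\ $b=\sqrt{8+4\sqrt3}=\sqrt2+\sqrt6$. This is the source of the upper bound on $b$.
\item If $-d-a>0$, we need $b<2$. Combining with $a<b$ and monotonicity in $a$ should give $a<2/\sqrt3$, because at $b=2$ the boundary $\sqrt3-\sqrt{a^2-1}=a$ yields $a=2/\sqrt3$.
\end{itemize}
The remaining bounds, $b>\sqrt2$ in $D_{outer_1}$ and $a>2/\sqrt3$ in $D_{outer_2}$, should follow from monotonicity of $g$ on each side. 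I would show $\partial_b g$ (or $\partial_a g$) has a fixed sign there, and that $g$ has a definite sign on the lines $b=\sqrt2$ (where $a<b$ and $a>1$) and $a=2/\sqrt3$. These are one-variable sign checks that I would certify with interval arithmetic. At the point $(2/\sqrt3,2)$ both factors vanish and $\mu$ is a $0/0$ limit. Substituting directly into \eqref{cc2} with $|d|=1$ and $a^3=8/(3\sqrt3)$ gives $\mu=1$.

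\emph{Inner case} ($a>b$, $d>0$). Here $|d|-b=h_{inner}$, and $\mu>0$ requires $(2-a)/h_{inner}>0$. In $g$, the factor $d(d-a)$ is negative, because $\sqrt{a^2-1}-\sqrt{b^2-1}<a$. Hence $g=0$ forces $2-b>0$, which is automatic when $b<2$. So the first term is negative and we need $b<2$ with $(2-b)\check g_2$ balancing it.
\begin{itemize}
\item Branch $a<2$, $h_{inner}<0$. Here I would show that $g=0$ forces $b<2/\sqrt3$. On the edge $a=2$ the equation reduces to \eqref{ginner2b}, whose two roots $\beta_3$ and $2/\sqrt3$ give the $b$-range. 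Showing $a>\sqrt2$ requires a sign check of $g$ on $a=\sqrt2$.
\item Branch $a>2$, $h_{inner}>0$. The boundary intersection of $h_{inner}=0$ with $g=0$ defines $(\alpha_4,\beta_4)$, which I would isolate with a Krawczyk-operator enclosure.
\end{itemize}
At $(2,2/\sqrt3)$, i.e.\ the equilateral triangle with a central mass, \eqref{cc2} holds identically in $\mu$, so every $\mu>0$ is admissible.

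\emph{Limit $\mu=0$.} With $\mu=0$, \eqref{cc2} reduces to $\sqrt{a^2-1}(1/4-2/a^3)=0$, so $a=1$ or $a=2$. Substituting into \eqref{cc3} gives \eqref{gouter1b}, \eqref{gouter2b} and \eqref{ginner2b}. The root counts on the stated intervals come from checking that each function has at most one (respectively two) sign changes, via derivative-sign arguments certified by interval arithmetic, with the numerical roots enclosed by the Krawczyk operator.

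The main obstacle is proving the sharp rectangular bounds. Showing that no solution of $g=0$ leaves these rectangles, rather than just identifying the boundary points, needs a monotonicity of $g$ in one variable over a two-dimensional region. I would first try to certify $\partial_b g\neq0$ on each subregion by an interval subdivision. Near the singular corner points, where the sign-determining factors vanish, I would handle the behaviour by a local expansion.
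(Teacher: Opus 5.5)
\textbf{Gap: the inner-case signs are reversed.} Because your method reads the admissible region off sign constraints, this breaks parts 2.1) and 2.2). For $a>b$ you have $d>0$ and $|d|-b=h_{inner}$, so by \eqref{mab} the sign of $\mu$ is the sign of $(2-a)/h_{inner}$. Thus $\mu>0$ forces $h_{inner}>0$ when $a<2$, and $h_{inner}<0$ when $a>2$. This is the opposite of what you wrote. Table~\ref{innervalues} confirms it: at $\mu=0.4$ one gets $a\approx1.87$ and $h_{inner}\approx0.10$, while at $\mu=5$ one gets $a\approx2.54$ and $h_{inner}\approx-0.05$.

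With your signs, in the branch $a>2$ you would keep the arc of $g_{inner}=0$ beyond $(\alpha_4,\beta_4)$, i.e.\ $b>\beta_4$, which is exactly the arc the lemma excludes. You would also discard the arc from $(2,2/\sqrt3)$ to $(\alpha_4,\beta_4)$, which is the one carrying the isosceles family. The paper's argument at this step has two parts. First, in the relevant box, $g_{inner}=0$ and $h_{inner}=0$ meet only at $(2,2/\sqrt3)$ and $(\alpha_4,\beta_4)$. Second, at the point of $g_{inner}=0$ with $a=3$ one certifies $h_{inner}(3,I_h)>0$. Hence $h_{inner}>0$ on the arc with $b>\beta_4$, which contradicts $\mu>0$.

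Once the signs are corrected, your approach actually gets simpler and mirrors your outer case:
\begin{itemize}
\item $a<2$ and $h_{inner}>0$ give $b+\sqrt{b^2-1}<\sqrt{a^2-1}<\sqrt3$, hence $b<2/\sqrt3$ with no appeal to $g$.
\item $a>2$ and $h_{inner}<0$ give $b>2/\sqrt3$.
\item Once $b<\beta_4$ is known, $h_{inner}<0$ gives $\sqrt{a^2-1}<\beta_4+\sqrt{\beta_4^2-1}=\sqrt{\alpha_4^2-1}$, i.e.\ $a<\alpha_4$.
\end{itemize}

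\textbf{Smaller points.} At $(2/\sqrt3,2)$ it is $g$, not $\mu$, whose two factors $|d|-a$ and $2-b$ vanish. The mass $\mu$ is regular there, and $|d|=r_{34}=2/\sqrt3$, not $1$; your value $\mu=1$ is nonetheless correct. The genuinely indeterminate point is the inner one $(2,2/\sqrt3)$, where $x=y_1=0$.

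For the bounds $b>\beta_3$ and $a>\sqrt2$ in $D_{inner_1}$, the monotonicity you need is in $a$, not $b$. The estimate \eqref{ginnerabtoa} gives $\partial g_{inner}/\partial a<0$ analytically on all of $\{1<b<a\}$. This transfers the sign of $g_{inner}(2,b)$ on $(1,\beta_3)$, and the sign of $g_{inner}(\sqrt2,b)$, to the whole strip, so no interval subdivision near the singular corners is needed.

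\textbf{Comparison with the paper.} Apart from the above, your outer-case and $\mu=0$ arguments are sound, and they take a different route from the paper. The paper imports the box $a\in(1,2)$, $b\in(\sqrt2,\sqrt2+\sqrt6)$ from Moeckel's perpendicular bisector theorem, via the Liu--Xie corollary it cites. It obtains the inner box by swapping $a$ and $b$. You instead extract $a<2$, $b<\sqrt2+\sqrt6$ and the split at $a=2/\sqrt3$ directly from the signs of $\mu$ and $g$, which makes the argument self-contained.

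In your outer case, $a>2/\sqrt3$ in $D_{outer_2}$ is immediate from $-d-a<0$ and $b>2$, just as $a<2/\sqrt3$ was in the other branch, so no sign check on $a=2/\sqrt3$ is needed. The bound $b>\sqrt2$ does need two ingredients. One is the monotonicity of $g_{outer}$ in $b$; the estimate of Lemma~\ref{partialgb} also works for $b\le\sqrt2$ when $a<b$. The other is the sign of $g_{outer}(a,\sqrt2)$ computed in the proof of Lemma~\ref{outerlemma}.
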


\begin{figure}[htbp]
        \begin{minipage}[t]{0.3\textwidth}
        \centering
        \includegraphics[width=\linewidth]{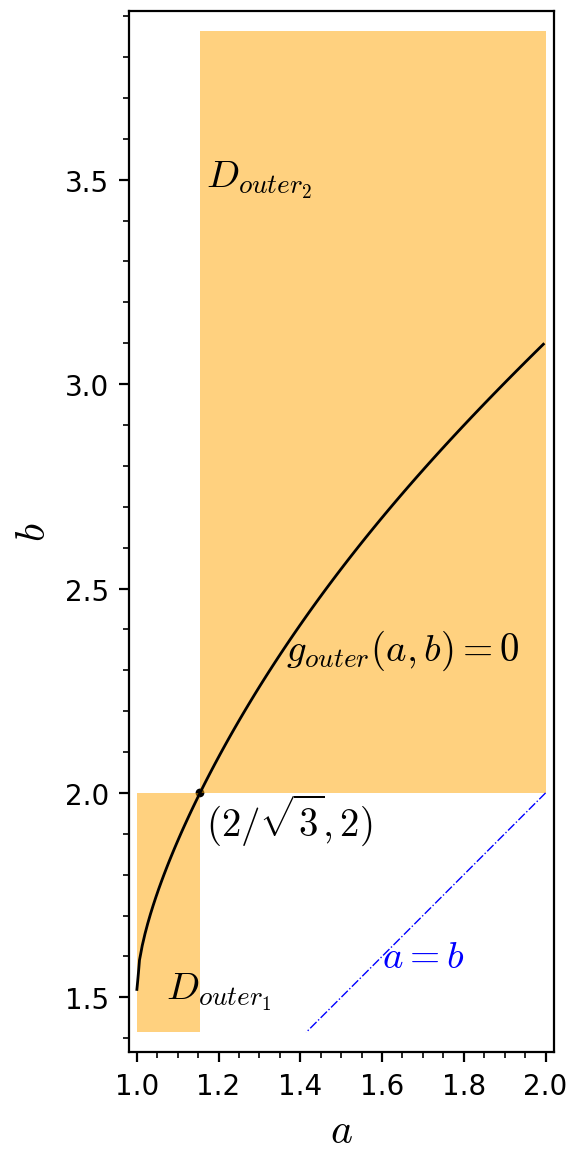}
\caption{The two orange regions denote the $D_{outer_1}$ and $D_{outer_2}$ respectively on the $aOb$-plane. The black curve denotes $g_{outer}(a,b)=0$.}
\label{outer_rec}
    \end{minipage}
      \hfill
    \begin{minipage}[t]{0.65\textwidth}
        \centering
        \includegraphics[width=\textwidth]{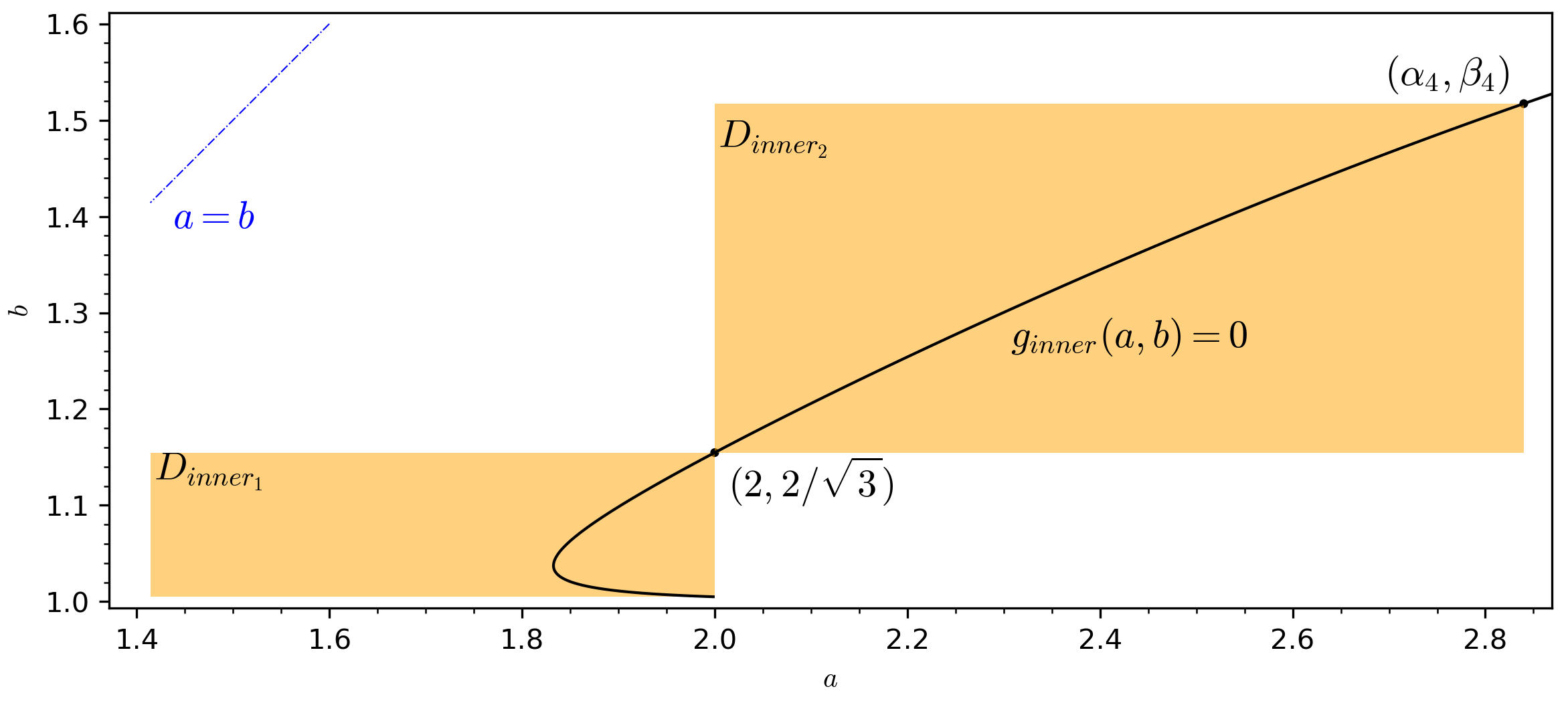}
        \caption{The two orange regions denote the $D_{inner_1}$ and $D_{inner_2}$ respectively on the $aOb$-plane. The black curve denotes $g_{inner}(a,b)=0$.}
        \label{inner_rec}
    \end{minipage}
\end{figure}

\begin{figure}[htbp]
\begin{minipage}[t]{0.47\textwidth}
        \includegraphics[width=\linewidth]{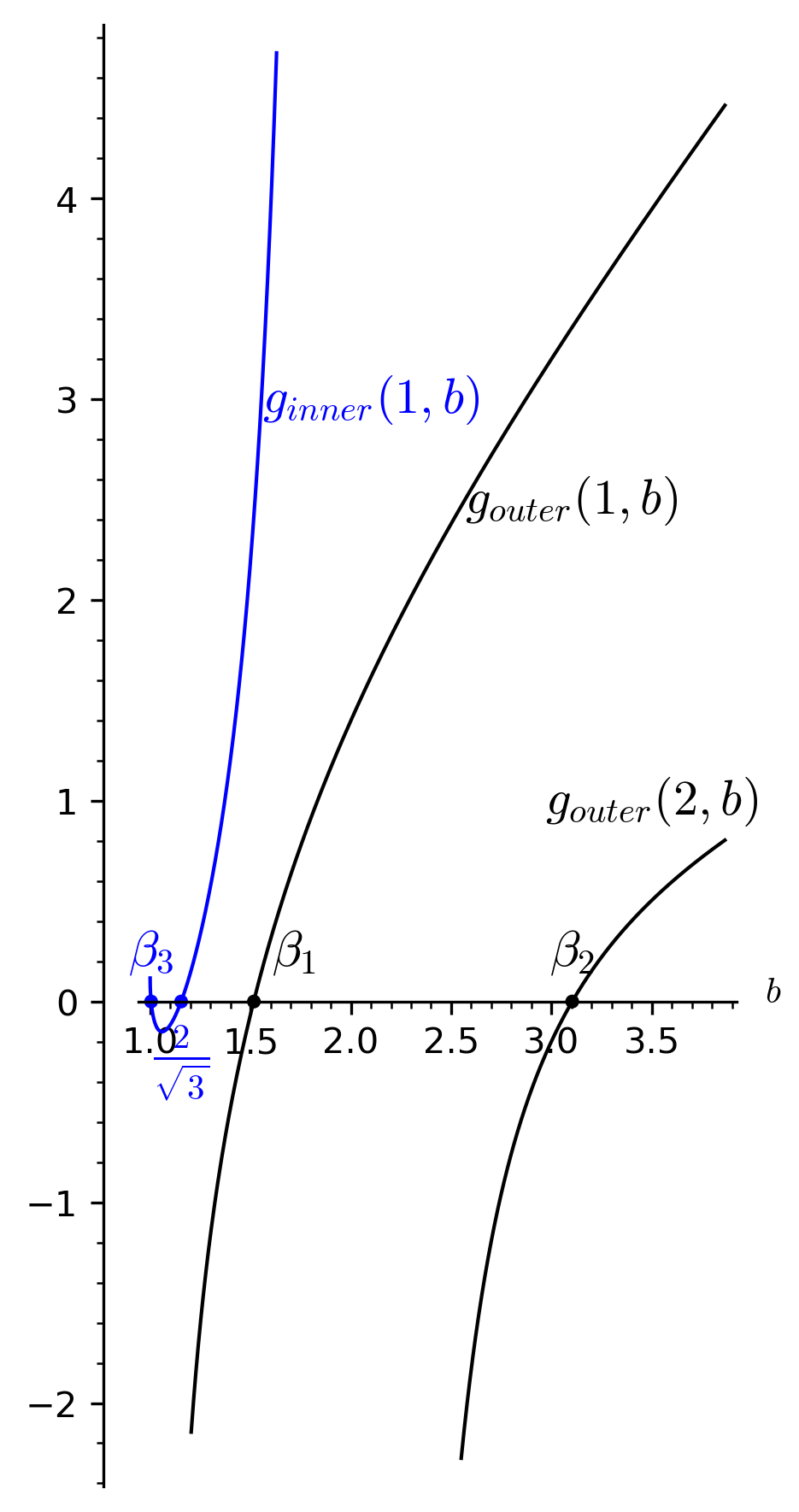}
\caption{The zeros of $g_{inner}(1,b)=0$, $g_{outer}(1,b)=0$ and $g_{outer}(2,b)=0$.}
\label{beta134}
\end{minipage}
\hfill
\begin{minipage}[t]{0.47\textwidth}
        \centering
        \includegraphics[width=\textwidth]{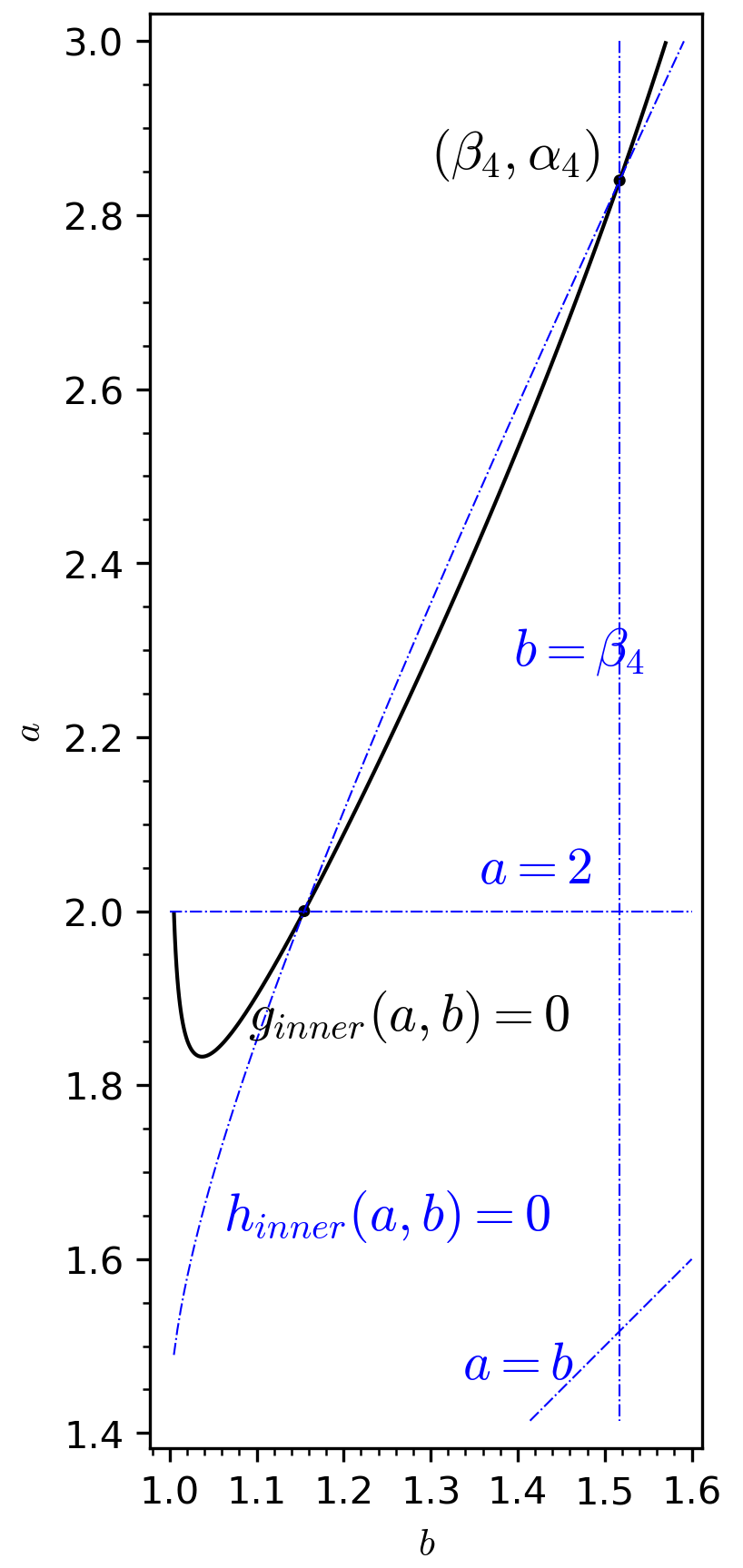}
        \caption{The two intersections $(2,2/\sqrt{3})$ and $(\alpha_4,\beta_4)$ of $g_{inner}(a,b)=0$ and $h_{inner}(a,b)=0$ on the $bOa$ plane.}
        \label{innergh}
\end{minipage}
\end{figure}

\begin{proof}
In the outer case, from the Perpendicular Bisector Theorem \cite{moeckel1990}, we find that it is the same as Corollary 1 in \cite{liu2026}, i.e., $a\in (1,2)$ and $b\in (\sqrt{2},\sqrt{2}+\sqrt{6})$. 
	Together with $a<b$, we have $(a,b)\in D_{outer}$. 

Although the inner case has been thoroughly discussed in \cite{shi2010} using different coordinates and variables, we still choose to present some details here to maintain consistency with the outer case. 

Firstly, by exchanging the positions of $a$ and $b$ in the outer case, we obtain $(a,b)\in \left((\sqrt{2},2)\times (1,2/\sqrt{3})\right)\cup \{(2,2/\sqrt{3})\}\cup \left((2,\sqrt{2}+\sqrt{6})\times (2/\sqrt{3},2)\right)$. One can see the orange regions in Figure \ref{outer_rec}.

Secondly, noticing that 
\begin{equation}\label{ginnerabtoa}
	\begin{aligned}
	&\partial g_{inner}(a,b)/\partial a\\
		=&-\frac{2 a}{\sqrt{a^2-1} \left(\sqrt{a^2-1}-\sqrt{b^2-1}\right)^3}-\frac{1}{a^2 \sqrt{a^2-1}}+\frac{3 \left(\sqrt{a^2-1}-\sqrt{b^2-1}\right)}{a^4}\\
		=&\frac{\sqrt{a^2-1}-\sqrt{b^2-1}}{a^4}-\frac{\sqrt{a^2-1}}{a^2 \sqrt{a^2-1}^2}\\
		&+\frac{2 \left(\sqrt{a^2-1}-\sqrt{b^2-1}\right)}{a^4}-\frac{2 a}{\sqrt{a^2-1} \left(\sqrt{a^2-1}-\sqrt{b^2-1}\right)^3}\\
		<&\frac{\sqrt{a^2-1}-\sqrt{b^2-1}-\sqrt{a^2-1}}{a^2(a^2-1)}+2\frac{\sqrt{a^2-1}-\sqrt{b^2-1}-a}{\sqrt{a^2-1} \left(\sqrt{a^2-1}-\sqrt{b^2-1}\right)^3}<0
	\end{aligned}
\end{equation}
holds for $1<b<a$, then we have $g_{inner}(a,b)>g_{inner}(2,b)$ with $a<2$.
This implies that the smallest positive zero of $g_{inner}(2,b)=0$ is the smallest positive zero of $g_{inner}(a,b)=0$ (if any) for any $a\in (\sqrt{2},2)$. Then by using the interval arithmetic method and the Krawczyk operator introduced in Section \ref{koperator}, we find the two positive roots for \eqref{ginner2b} easily, and the smaller one is $\beta_3$. When $b\leq\beta_3$, we have $g_{inner}(a,b)>0$ for any $a\in(\sqrt{2},2)$. Hence we have $(a,b)\in D_{inner_1}$. 

Thirdly, for 2.2), by using the interval arithmetic method and the Krawczyk operator, we show that $g_{inner}(a,b)=0$ and  $h_{inner}(a,b)=0$ have exactly two solutions when $(a,b)\in (\sqrt{2},\sqrt{2}+\sqrt{6})\times (\beta_3,2)$, as shown in Figure \ref{innergh}. We find $g_{inner}(3,b)=0$ has one zero in $b\in I_h=[1.570610354361449,1.570610354361450]$. By direct interval computation we have $h_{inner}(3,I_h)=[0.0466946108866150, 0.0466946108866194]\subset (0,+\infty)$. This implies that $h_{inner}(a,b)>0$ holds for the points $(a,b)$ on $g_{inner}(a,b)=0$ with $b>\beta_4$ and $a>2$. 
This contradicts $\mu_{inner}>0$ according to the expression \eqref{mab}. Hence we have $D_{inner_2}=(2,\alpha_4)\times (2/\sqrt{3},\beta_4)$. One can see the orange regions in Figure \ref{inner_rec}.

%For 2.1) and 2.2), by using the interval arithmetic method and the Krawczyk operator introduced in Section \ref{koperator}, we can find the two positive roots for \eqref{ginner2b}, and the two intersections of $g_{inner}(a,b)=0$ and $h_{inner}(a,b)=0$ without much difficulty, as shown in Figure \ref{beta134} and \ref{innergh}. 

%When $b>\beta_4$ we have $a>2$. Then for any $(a,b)$ satisfying $g_{inner}(a,b)=0$, we have $h_{inner}(a,b)>0$. This contradicts $m_{inner}>0$ according to the expression \eqref{mab}. Hence $b<\beta_4$.
Finally, for 2.3), from the expressions of \eqref{cc2} and \eqref{xx1yy1}, namely, $\mu\cdot y_1=x$, we have $x=0\Leftrightarrow a=2$ and $y_1=0\Leftrightarrow b=2/\sqrt{3}$. This holds for any $\mu>0$.

For the case $\mu=0$, a similar computation is done by substituting $\mu=0$ into \eqref{outereqs} and \eqref{innereqs}. By solving both systems of equations using the interval arithmetic method and the Krawczyk operator, we obtain the corresponding results. This also completes the third part of the proof for Theorem \ref{th2}.
\end{proof}
\begin{remark}\label{alphabeta4}
	When $g_{inner}(a,b)=0$ and $h_{inner}(a,b)=0$ intersect at $(2,2/\sqrt{3})$ (i.e., point $(a_I,b_I)$ in Figure \ref{innercase}) and $(\alpha_4,\beta_4)$, we find $\sqrt{a^2-1}-\sqrt{b^2-1}=b$, i.e., $r_{34}=r_{14}=r_{24}$, and the denominator of $\mu$ in \eqref{mab} vanishes, namely, $\mu\to +\infty$. This corresponds to the two concave central configurations of the $(1+3)$-body problem when the large mass $m_4$ is set to be 1, and the three small masses are equal. The limit positions of the three infinitesimal masses are on the circle centered at the only large mass.
\end{remark}
\begin{figure}[t]%[htbp]
        \begin{minipage}[t]{0.4\textwidth}
        \centering
        \includegraphics[width=\textwidth]{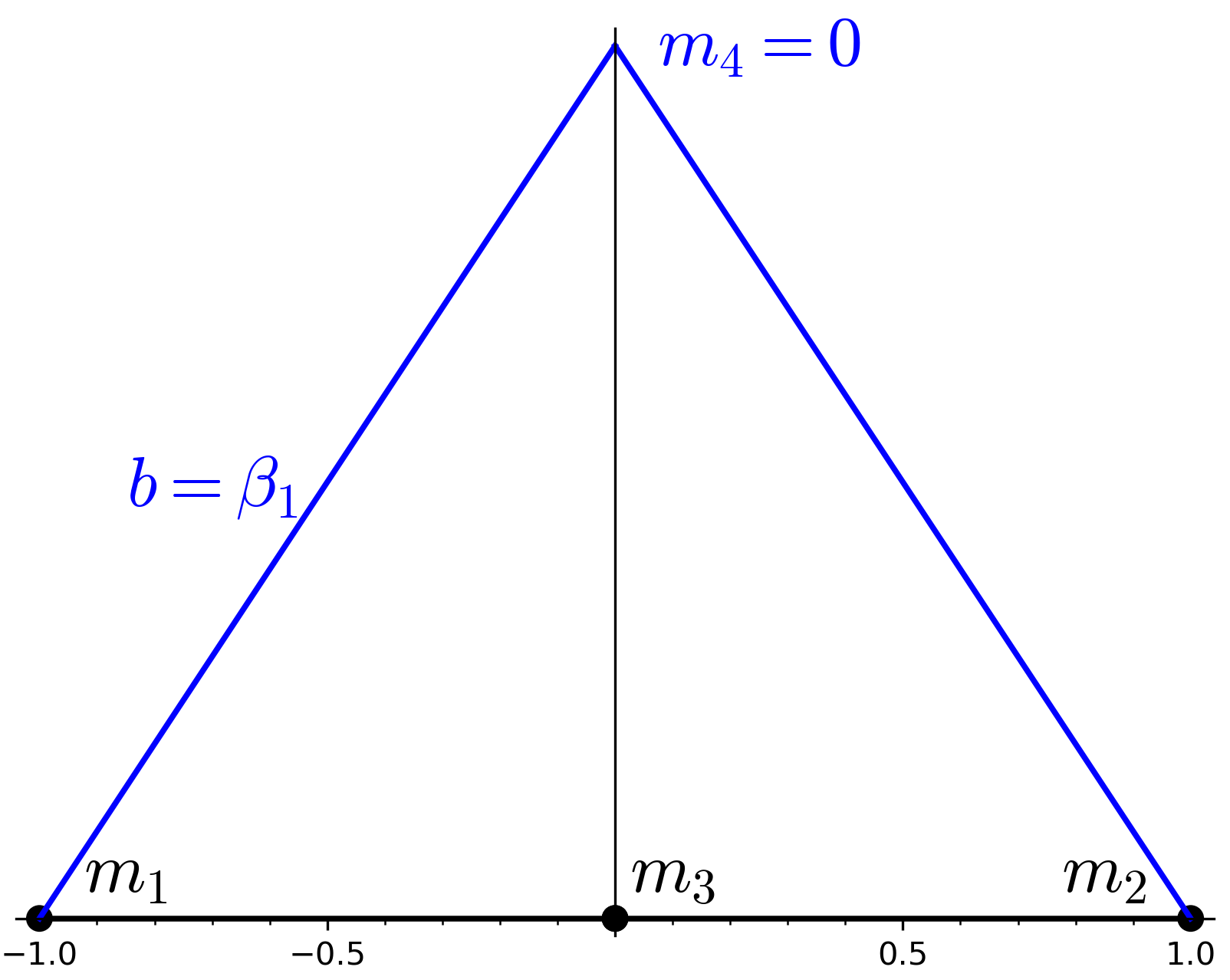}
        \caption{$a=1<b$ with $m_4=\mu=0$. $m_1,m_2$ and $m_3$ are collinear with $m_3$ on the middle of $m_1$ and $m_2$.}
        \label{a=1}
    \end{minipage}
      \hfill
    \begin{minipage}[t]{0.4\textwidth}
        \centering
        \includegraphics[width=\textwidth]{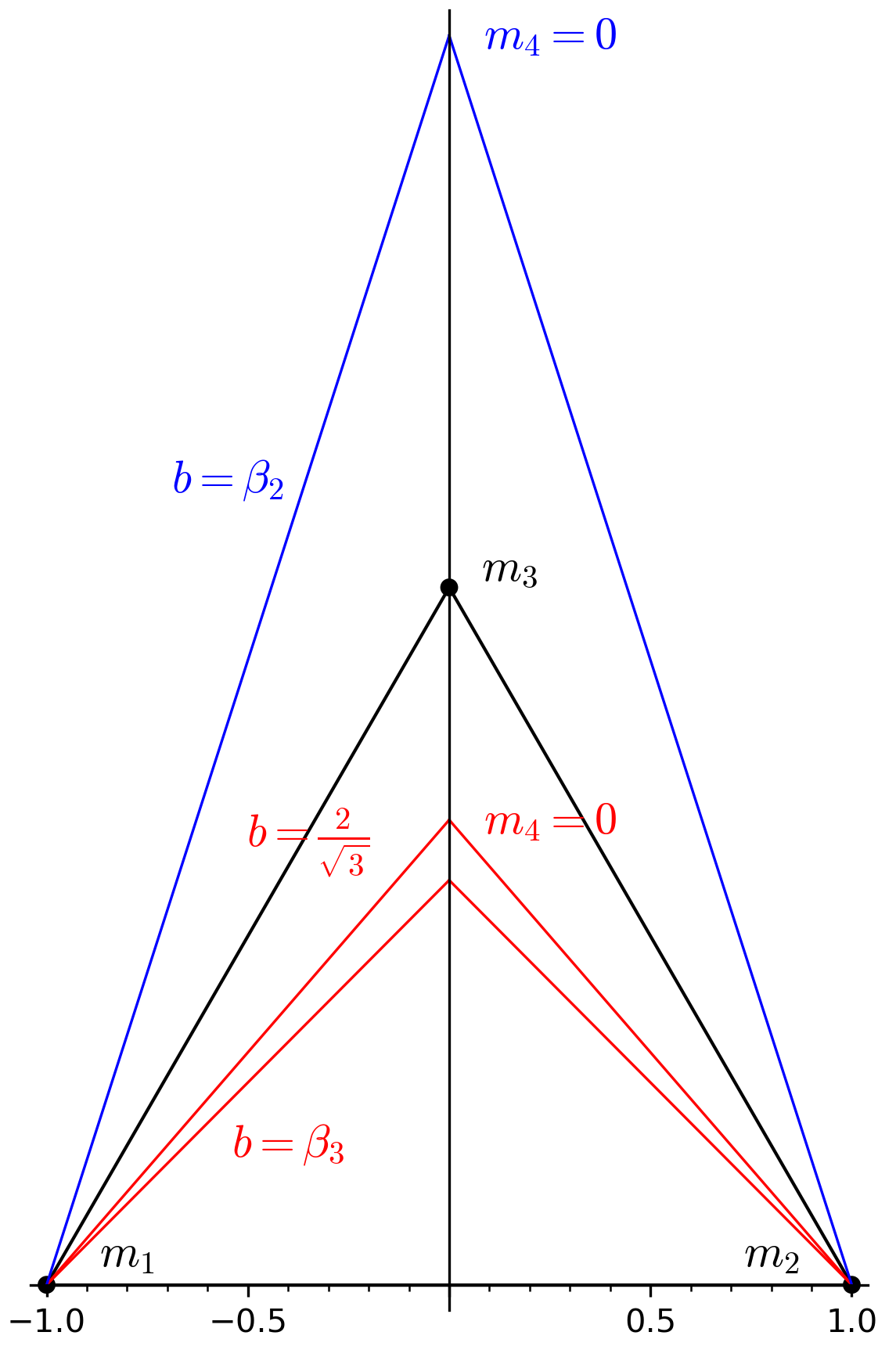}
        \caption{$a=2$ with $m_4=\mu=0$. The two equal legs in blue denote the outer case when $b>a$, and the red analogs denote the two limiting central configurations in the inner case with $b<a$.}
        \label{a=2}
    \end{minipage}
\end{figure}

In Section \ref{outersec} and Section \ref{innersec}, we will deal with the outer case and the inner case, respectively, since they correspond to two different equation systems.

\section{The outer case}\label{outersec}
For convenience, we denote by $I_1=(1,2)$ and $I_2=(\sqrt{2},\sqrt{2}+\sqrt{6})$ two open intervals.
\subsection{The proof of Theorem \ref{th1}: Part 1}
\begin{lemma}\label{partialgb}
	For $(a,b)\in D_1=(I_1\times I_2)\backslash \{(a,b)\vert a\geq b\}$, we have $\partial g_{outer}/\partial b>0$.
\end{lemma}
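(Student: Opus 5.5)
The plan is to write $g_{outer}$ explicitly and differentiate term by term; I expect every term of $\partial g_{outer}/\partial b$ to be positive on $D_1$, so no interval computation should be needed. Put $s=\sqrt{b^2-1}$ and $d=\sqrt{a^2-1}-s$. On $D_1$ we have $a<b$, hence $d<0$ and $|d|=-d$. Substituting $|d|=-d$ into \eqref{xx1yy1} gives $x_1=d/a^3-d/|d|^3=d/a^3+1/d^2$. Since $y=2s/b^3-s/4$, this yields
\begin{equation*}
g_{outer}(a,b)=-\frac{d}{a^3}-\frac{1}{d^2}-\frac{2s}{b^3}+\frac{s}{4},
\end{equation*}
taken with the overall sign normalized to agree with \eqref{gouter1b} and \eqref{gouter2b}. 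As a first step I will check that this sign is correct by setting $a=1$ (so $d=-s$) and $a=2$ (so $d=\sqrt3-s$). Both substitutions reproduce \eqref{gouter1b} and \eqref{gouter2b} exactly. This fixes the sign convention; the term-by-term argument depends on it.

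Next I differentiate with $a$ held fixed, using $\partial s/\partial b=b/s$ and $\partial d/\partial b=-b/s$. The four contributions are:
\begin{itemize}
\item from $-d/a^3$: $\dfrac{b}{a^3 s}>0$;
\item from $-1/d^2$: $\dfrac{2}{d^3}\cdot\dfrac{\partial d}{\partial b}=-\dfrac{2b}{s\,d^3}=\dfrac{2b}{s|d|^3}>0$, because $d<0$;
\item from $s/4$: $\dfrac{b}{4s}>0$;
\item from $-2s/b^3$: $-\dfrac{2}{sb^2}+\dfrac{6s}{b^4}=\dfrac{4b^2-6}{s\,b^4}$.
\end{itemize}

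The only term needing attention is the last one. It is positive exactly when $b>\sqrt{3/2}$, and this holds on $D_1$ because $b\in I_2$ forces $b>\sqrt2$. Hence all four contributions are strictly positive, and $\partial g_{outer}/\partial b>0$ on $D_1$.

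The main obstacle is not the estimate but the bookkeeping. The first is the sign convention relating $g$ in \eqref{gab} to the normalized $g_{outer}$ in \eqref{gouter1b}. The second is the handling of $|d|$ when $d<0$, which is what makes the $-1/d^2$ term contribute positively. Once the closed form above is confirmed against \eqref{gouter1b} and \eqref{gouter2b}, the positivity is immediate. If the sign normalization turned out to be the opposite one, I would instead prove $\partial g/\partial b<0$ for the unnormalized $g$ of \eqref{gab}. This is the same statement, since the positive factors $\check g_1,\check g_2$ do not affect the zero set.
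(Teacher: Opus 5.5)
Your proposal is correct and is essentially the paper's proof. The paper writes $\partial g_{outer}/\partial b$ as the single fraction $\bigl(4b^5(1/a^3-2/d^3)+b^5+16b^2-24\bigr)/(4b^4 s)$, which is exactly the sum of your four terms, and it gets positivity from $d<0$ (because $a<b$) and $b>\sqrt2$, just as you do. Your observation that the normalized $g_{outer}$ equals $-(x_1+y)$ rather than the literal $x_1+y$ of \eqref{gab} is also right; it is the convention that \eqref{gouter1b}, \eqref{gouter2b} and the paper's derivative formula implicitly use.
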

\begin{proof}
By direct computation, we have
\begin{equation*}
\frac{\partial g_{outer}(a,b)}{\partial b}
	=\frac{4b^5 \left(\frac{1}{a^3}-\frac{2}{\left(\sqrt{a^2-1}-\sqrt{b^2-1}\right)^3}\right)+b^5+16 b^2-24}{4 b^4 \sqrt{b^2-1}}>0
\end{equation*}
holds for $(a,b)\in (I_1\times I_2)\backslash \{(a,b)\vert a\geq b\}$, since $\frac{1}{a^3}-\frac{2}{\left(\sqrt{a^2-1}-\sqrt{b^2-1}\right)^3}>0$ holds for $a<b$, and $b^5+16 b^2-24>0$ holds for $b>\sqrt{2}$. 
\end{proof}
%This holds for $(a,b)\in D_{outer}$, since $D_{outer}\subset (I_1\times I_2)\backslash \{(a,b)\vert a\geq b\}$.
\begin{lemma}\label{outerlemma}
	For any $a\in I_1$, there exists a unique $b= b_{outer}(a)$ such that 
	\begin{enumerate}[label=\arabic*)]
		\item $g_{outer}(a,b)<0$ for $ \max\{a,\sqrt{2}\}<b<b_{outer}(a)$,
		\item $g_{outer}(a,b_{outer}(a))=0$, and 
		\item $g_{outer}(a,b)>0$ for $ b_{outer}(a)<b<\sqrt{2}+\sqrt{6}$.
	\end{enumerate}
	\end{lemma}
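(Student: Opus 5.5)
The plan is to combine the monotonicity from Lemma \ref{partialgb} with a sign check at the two ends of the admissible $b$-interval, and then apply the intermediate value theorem. Fix $a\in I_1$ and put $b_-=\max\{a,\sqrt{2}\}$ and $b_+=\sqrt{2}+\sqrt{6}$. For $b\in(b_-,b_+)$ the point $(a,b)$ lies in $D_1$, so $b\mapsto g_{outer}(a,b)$ is continuous and strictly increasing there by Lemma \ref{partialgb}. It then suffices to show that $g_{outer}(a,b)$ is negative as $b\to b_-^+$ and positive at $b=b_+$. The intermediate value theorem gives a zero $b_{outer}(a)$, strict monotonicity makes it unique, and items 1)–3) follow at once.

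\emph{Working formula.} Write $d=\sqrt{a^2-1}-\sqrt{b^2-1}$, which is negative for $a<b$. I will use the explicit outer-case form
\begin{equation*}
g_{outer}(a,b)=-\frac{d}{a^3}-\frac{1}{d^2}+\sqrt{b^2-1}\left(\frac14-\frac{2}{b^3}\right).
\end{equation*}
I will first check this form against \eqref{gouter1b} and \eqref{gouter2b}: setting $a=1$ and $a=2$ reproduces both displayed formulas. Its $b$-derivative also agrees with the expression in Lemma \ref{partialgb}.

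\emph{Lower end.} There are two cases.
\begin{itemize}
\item If $a\ge\sqrt2$, then $b_-=a$. As $b\to a^+$ we have $d\to0^-$, so $-1/d^2\to-\infty$ while the other terms stay bounded. Hence $g_{outer}\to-\infty$.
\item If $a<\sqrt2$, then $b_-=\sqrt2$. I evaluate $g_{outer}(a,\sqrt2)$ directly, with $d=\sqrt{a^2-1}-1\in(-1,0)$:
\begin{equation*}
g_{outer}(a,\sqrt2)=\frac{|d|}{a^3}-\frac{1}{d^2}+\frac14-\frac{1}{\sqrt2}.
\end{equation*}
Since $a>1$ and $|d|<1$, we get $|d|/a^3<|d|<1<1/d^2$. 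Also $\tfrac14-\tfrac1{\sqrt2}<0$, so $g_{outer}(a,\sqrt2)<0$. By continuity, $g_{outer}<0$ for $b$ slightly above $\sqrt2$.
\end{itemize}

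\emph{Upper end.} At $b=b_+$ we have $b^2=8+4\sqrt3$, so $\sqrt{b^2-1}=2+\sqrt3$. Because $a<2$, $\sqrt{a^2-1}<\sqrt3$ and therefore $|d|>2$. This gives $-d/a^3>0$ and $-1/d^2>-\tfrac14$. Moreover $b_+^3>50$, so $\tfrac14-\tfrac2{b_+^3}>0.2$, and the last term exceeds $0.2(2+\sqrt3)>0.7$. Hence $g_{outer}(a,b_+)>0.45>0$. Since $g_{outer}(a,\cdot)$ extends continuously to $b_+$ and is increasing, it is also positive on $(b_{outer}(a),b_+)$.

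The main obstacle is not conceptual. It lies in pinning down the correct sign convention for the $1/d^2$ term: the displayed definition \eqref{gab} with absolute values is easy to misread. I resolve this by checking against the explicit specializations \eqref{gouter1b} and \eqref{gouter2b}, after which both endpoint estimates are elementary and need no interval arithmetic.
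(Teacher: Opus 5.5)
Your proof is correct and follows essentially the same route as the paper. Both check the sign at $b=\sqrt{2}+\sqrt{6}$, at $b=\sqrt{2}$, and in the limit $b\to a^+$; for the last, the paper uses the numerator $\check g_{outer}\to -4a^6$ where you use $-1/d^2\to-\infty$. Both then combine the intermediate value theorem with the monotonicity from Lemma \ref{partialgb} to get existence and uniqueness.

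Your sign convention $g_{outer}=-d/a^3-1/d^2+\sqrt{b^2-1}\bigl(\tfrac14-\tfrac{2}{b^3}\bigr)$ is the one the paper actually uses in Lemma \ref{partialgb}, in \eqref{gouter1b}--\eqref{gouter2b}, and in its $\check g_{outer}$ form; it is the negative of $x_1+y$ as literally written in \eqref{gab}. Your explicit split into $a<\sqrt{2}$ and $a\ge\sqrt{2}$ makes clear which lower-endpoint check is needed in each case, which the paper leaves implicit.
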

\begin{proof}
On the one hand, we have
\begin{equation*}
	g_{outer}(a,\sqrt{2}+\sqrt{6})=\frac{(2+\sqrt{3}-\sqrt{a^2-1})^3-a^3}{a^3(2+\sqrt{3}-\sqrt{a^2-1})^2}+\frac{3}{4}>0,
\end{equation*}
since $2+\sqrt{3}-\sqrt{a^2-1}-a=(2-a)+(\sqrt{3}-\sqrt{a^2-1})>0$ for $a\in I_1$. 
We have
\begin{equation*}
		g_{outer}(a,\sqrt{2})= -\frac{a^3+(\sqrt{a^2-1}-1)^3}{a^3(\sqrt{a^2-1}-1)^2}+\frac{1}{4}-\frac{1}{\sqrt{2}}<0
	\end{equation*}
since $a+\sqrt{a^2-1}-1=(a-1)+\sqrt{a^2-1}>0$ holds for $a\in I_1$. 
Then we consider the limit of 
\begin{equation*}
g_{outer}(a,b)=\frac{\check g_{outer}(a,b)}{4 a^3 b^3(\sqrt{a^2-1}-\sqrt{b^2-1})^2}
\end{equation*}
when $b\to a^+$, where $\check g_{outer}(a,b)=a^3 \left(b^3-8\right) \sqrt{b^2-1} (\sqrt{a^2-1}-\sqrt{b^2-1})^2-4b^3( a^3 +(\sqrt{a^2-1}-\sqrt{b^2-1})^3)$. 
 It is easy to see that the denominator of $g_{outer}(a,b)$ is positive for any $b\neq a$, and $\lim\limits_{b\to a^+}\check g_{outer}(a,b)=-4a^6<0$ holds for $a\in I_1$. Both together imply
 $$\lim\limits_{b\to a^+}g_{outer}<0.$$
 Hence, by the Intermediate Value Theorem, there exists at least one $b\in (\max\{a,\sqrt{2}\},\sqrt{2}+\sqrt{6})$ such that $g_{outer}(a,b)=0$. 

On the other hand, from Lemma \ref{partialgb}, for any $(a,b)\in D_{outer}\subset D_1$, we have $\partial g_{outer}/\partial b>0$. Hence, from the Implicit Function Theorem, $b= b_{outer}(a)$ is a differential function with respect to $a\in I_1$. One can see the black dashed curve in Figure \ref{outerfig}.
\end{proof}

From Lemma \ref{outerlemma} we know that $b=b_{outer}(a)$ is uniquely determined by $a\in I_1$. By substuting $b=b_{outer}(a)$ into the first expression in \eqref{outereqs} we obtain
\begin{equation*}
\mu_{outer}(a, b_{outer}(a))=\hat \mu_{outer}(a)=\frac{ b_{outer}^3(8-a^3)\sqrt{a^2-1}(\sqrt{a^2-1}-\sqrt{ b_{outer}^2-1})^2 }{4a^3( b_{outer}^3+(\sqrt{a^2-1}-\sqrt{ b_{outer}^2-1})^3)}>0,
\end{equation*}
which implies $\mu_{outer}=\hat \mu_{outer}(a)$ is a positive differential function with respect to the only variable $a\in I_1$. 

\begin{figure}[htbp]
        \begin{minipage}{0.5\textwidth}
        \centering
        \includegraphics[width=\linewidth]{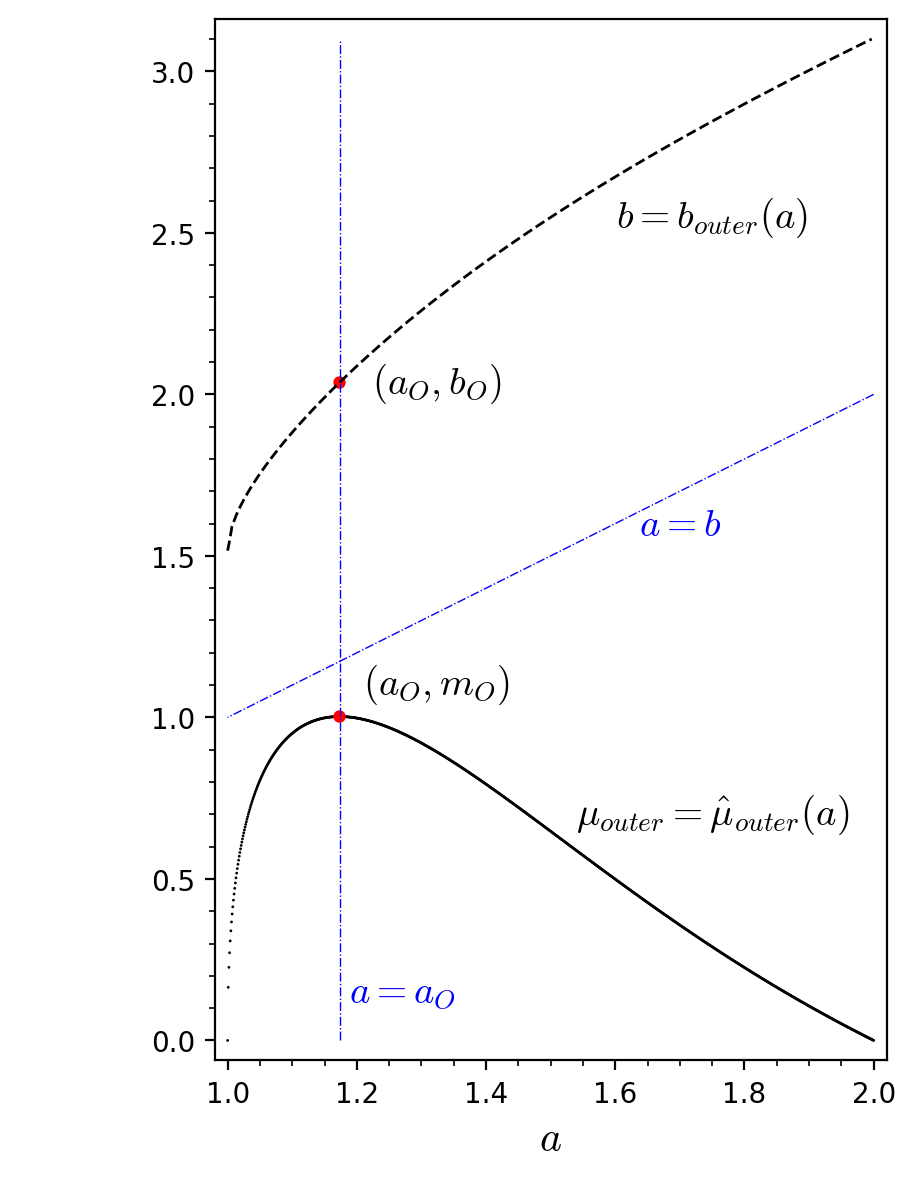}
    \end{minipage}
      \hfill
    \begin{minipage}{0.45\textwidth}
        \centering
        \caption{The function $b=b_{outer}(a)$ (the black dashed curve) and $\mu=\hat \mu_{outer}(a)$ (the black solid curve) with $a\in I_1$. The maximum point of $\hat \mu_{outer}(a)$ is around $(a,\mu)=(1.17318887031774929489004678$, $1.0026605475726100006858035)$, where $b\approx2.03659161617430193579842429$.}
       \label{outerfig}
    \end{minipage}
\end{figure}

\subsection{The proof of Theorem \ref{th2}: Part 1}\label{outermass}

In this section, we prove the first part of Theorem \ref{th2}. 
For the outer case, we notice that 
\begin{equation*}
	\frac{{\rm d}  \hat \mu_{outer}(a)}{{\rm d} a}=\frac{\partial \mu_{outer}}{\partial a}-\frac{\partial  \mu_{outer}}{\partial b}\cdot \frac{\frac{\partial g_{outer}}{\partial a}}{\frac{\partial g_{outer}}{\partial b}}.
\end{equation*}
By using the computer-assisted interval arithmetic introduced in Section \ref{IntK}, we show rigorously that the smooth function $\hat \mu_{outer}(a)$ possesses a unique maximum $\mu=m_{O}$ when $a\in I_1$. More precisely, we consider the equation $F_{outer}=(f_{outer},g_{outer})=0$, where
\begin{equation}\label{uniquedm}
\left\{
	\begin{aligned}
		0=&\frac{{\rm d} \hat \mu_{outer}(a)}{{\rm d} a}\cdot \frac{\partial g_{outer}}{\partial b}=f_{outer}(a,b),\\
		0=&g_{outer}(a,b).
	\end{aligned}
	\right.
\end{equation}
We aim to show $F_{outer}(a,b)=0$ has a unique solution in the interior of $D_1$. In fact, after excluding most of the interval pieces, we lock a small interval $ D_O=[1.17,1.75]\times [2.03,2.04]$ containing possible zeros. The domain $ D_O$ is first partitioned by dividing each dimensional interval into 200 equal parts. Each resulting subinterval is then evaluated with the Krawczyk operator from the interval arithmetic program, and all subintervals found to contain no zeros are excluded. Finally, we find the only solution $(a_{O},b_{O})$ of \eqref{uniquedm} in
\begin{equation}\label{uniquesol}
\left\{
	\begin{aligned}
 a_{O}=&1.17318887031774929489004678?\\
 b_{O}=&2.03659161617430193579842429?\\
	\end{aligned}
	\right.,
\end{equation}
and the corresponding mass is $\mu=m_{O}=1.00266054757261000068580350\textbf{216}?$.
%$$$$
In SageMath, the trailing question mark indicates it is an element (with a small uncertainty) in the real interval $[1.\cdots\textbf{21}, 1.\cdots\textbf{22}]$.
From Lemma \ref{partialgb} we know that $\partial g_{outer}(a,b)/\partial b>0$ for $(a,b)\in D_{outer}$, hence ${\rm d}  \hat \mu_{outer}(a)/{\rm d} a$ shares the same sign as $f_{outer}$. 
By substituting $a_1=1.1<a_{O}$ into $g_{outer}(a,b)=0$ we obtain $b_1=1.8823116918766211735913884695038?$. Then by the interval arithmetic, we have $f_{outer}(a_1,b_1)=5.338870446069789181020951478\textbf{74}?\subset [5.\cdots\textbf{7},5.\cdots\textbf{8}]$, i.e., $f_{outer}(a_1,b_1)>0$. This implies $\hat \mu_{outer}(a)$ increases in $a\in (1,a_{O})$ and decreases in $a\in (a_{O},2)$ with $a_{O}$ the unique maximum in $I_1$. One can see the solid black curve in Figure \ref{outerfig}. 

In addition, we consider the limitations at the two ends of $I_1$, and without much difficulty, we have
\begin{equation*}
	\begin{aligned}
		\lim\limits_{a\to 1^+}\hat \mu_{outer}(a,b)=&0,\\
		\lim\limits_{a\to 2^-}\hat \mu_{outer}(a,b)=&0.
	\end{aligned}
\end{equation*}
This implies that $\hat \mu_{outer}(a,b)$ can be continued to $a=1$ and $a=2$. Then we have the following results
\begin{enumerate}
	\item When $\tilde \mu\in (0,m_{O})$, the equation $\hat \mu_{outer}(a)=\tilde \mu$ has two solutions, namely, there are exactly two types of central configurations with the basic settings;
	\item Similarly, when $\tilde \mu =m_{O}$, there is a unique central configuration;
	\item When $\tilde \mu>m_{O}$, there is no such concave central configuration;
	\item In addition, when $\tilde \mu=0$ is admissible, there are three types of strictly concave central configurations and one type with three of the four bodies collinear. 
\end{enumerate}

%%%%%%%%%%%%%%%%%%%%
%%%%%%%%%%%%%%%%%%%%
%%%%%%%%%%%%%%%%%%%%
%%%%%%%%%%%%%%%%%%%%
%%%%%%%%%%%%%%%%%%%%
%%%%%%%%%%%%%%%%%%%%
%%%%%%%%%%%%%%%%%%%%
%%%%%%%%%%%%%%%%%%%%
%%%%%%%%%%%%%%%%%%%%

\subsection{Bifurcation for the outer case in the reduced subspace}\label{outerb}
We denote by $r=(a,b)$ the variables, and $\mu$ the parameter.
We consider the following system
\begin{equation*}
	H_{outer}(r,\mu)=(\mu-\mu_{outer},g_{outer})=0.
\end{equation*}
The Jacobian of $H_{outer}(r,\mu)$ with respect to $r$ is 
\begin{equation*}
J_{O}=
	\begin{bmatrix}
		j_{11}&j_{12}\\
		j_{21}&j_{22}
	\end{bmatrix}
	=\begin{bmatrix}
		-\frac{\partial \mu_{outer}}{\partial a}&-\frac{\partial \mu_{outer}}{\partial b}\\
		\frac{\partial g_{outer}}{\partial a}&\frac{\partial  g_{outer}}{\partial b}
	\end{bmatrix}.
\end{equation*}
It is easy to see that 
\begin{equation*}
	\det J_{O}=-f_{outer}(a,b).
\end{equation*}
From Section \ref{outermass} we know that $f_{outer}(a,b)=0$ has a unique zero $r_{O}=(a_{O},b_{O})$ in $D_{outer}$, and this implies $r_{O}$ is the only bifurcation point in the symmetric reduced space. In addition, we do some simple interval arithmetic in the following according to Theorem  \ref{typeofbifur} in Section \ref{bifurcations} to find that $r_{O}$ is a bifurcation point of a fold type. 
The eigenvectors of the eigenvalue $0$ corresponding to $J_{O}$ and $J_{O}^T$ can be chosen as 
\begin{equation*}
	v_{O}=[-j_{12},j_{11}]^T \text{ and } w_{O}=[-j_{21},j_{11}]^T.
\end{equation*}
$\partial H_{outer}/\partial \mu=[1,0]^T$. Substituting $(r_{O},m_{O})$ into $J_{O}$ we have 
\begin{equation*}
	J_{O}(r_{O},m_{O})=
	\begin{bmatrix}
		4.516621366283169955012080485?&-2.323508104950223708098024492?\\
		-5.4682991424714572047489378495?& 2.8130844601394040989994648844?
	\end{bmatrix}.
\end{equation*}
Then we have 
\begin{equation*}
\left\{
	\begin{aligned}
		w_{O}^T\cdot \frac{\partial H_{outer}}{\partial m}(r_{O},m_{O})=&-j_{21}(r_{O},m_{O})\\
		=&5.468299142471457204748937849\textbf{5}?\\
		\in &[5.\cdots\textbf{4},5.\cdots\textbf{6}]\not\ni0,\\
		w_{O}^TD^2H_{outer}(r_{O},m_{O})(v_{O},v_{O})=&w^T\cdot[D^2(\mu-\mu_{outer}(r))(v_{O},v_{O}),D^2g_{outer}(r)(v_{O},v_{O})](r_{O},m_{O})\\
		=&431.1813805037941508341333\textbf{27265}?\\
		\in&[431.\cdots\textbf{2610},431.\cdots\textbf{2843}]\not\ni0,
	\end{aligned}
\right. 
\end{equation*}
which implies $r_{O}$ is a fold bifurcation point.

\section{The inner case}\label{innersec}
The inner case is almost the same. We denote by $I_3=(\beta_3,\beta_4)$ two intervals. 

\subsection{The proof of Theorem \ref{th1}: Part 2}
\begin{lemma}\label{innerpartialg}

For any $(a,b)\in D_2=(I_2\times I_3)\backslash \{(a,b)\vert a\leq b\}$, we have $\partial g_{inner}/\partial a<0$.
\end{lemma}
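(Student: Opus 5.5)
The plan is to reuse the computation \eqref{ginnerabtoa} from the proof of Lemma \ref{rangeab}, which already gives an explicit formula for $\partial g_{inner}/\partial a$. The key observation is that the argument there never used $a<2$. It only used $1<b<a$, which guarantees $d:=\sqrt{a^2-1}-\sqrt{b^2-1}>0$. On $D_2$ we have $a\in I_2$, $b\in I_3$ and $a>b>1$, so $d>0$ holds on all of $D_2$.

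First, write out the derivative of $g_{inner}=x_1+y$ with respect to $a$. Only $x_1$ depends on $a$, and for $d>0$ it equals $d/a^3-1/d^2$. Differentiating gives
\begin{equation*}
\frac{\partial g_{inner}}{\partial a}=-\frac{2a}{\sqrt{a^2-1}\,d^3}-\frac{1}{a^2\sqrt{a^2-1}}+\frac{3d}{a^4},
\end{equation*}
which agrees with \eqref{ginnerabtoa}.

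Next, split the positive term $3d/a^4$ into $d/a^4+2d/a^4$. Pair the first piece with $-1/(a^2\sqrt{a^2-1})$, bounding $d/a^4< d/(a^2(a^2-1))$ and writing $1/(a^2\sqrt{a^2-1})=\sqrt{a^2-1}/(a^2(a^2-1))$. The combined numerator is $d-\sqrt{a^2-1}=-\sqrt{b^2-1}<0$.

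Pair the second piece $2d/a^4$ with $-2a/(\sqrt{a^2-1}\,d^3)$. Here I would show that $a^4 > \sqrt{a^2-1}\,d^4/a$. This follows from $d<\sqrt{a^2-1}<a$, which gives $d^4\sqrt{a^2-1}<a^5$. Hence $2d/a^4<2a/(\sqrt{a^2-1}\,d^3)$, and this pair is negative as well. Summing the two pairs gives $\partial g_{inner}/\partial a<0$ on $D_2$.

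The main obstacle is only making sure this elementary bound for the second pair is airtight, since the displayed chain in \eqref{ginnerabtoa} is somewhat loose. The inequality $d<\sqrt{a^2-1}$ is immediate from $b>1$, so no interval arithmetic should be needed. The restrictions $a\in I_2$ and $b\in I_3$ enter only through $a>b>1$.
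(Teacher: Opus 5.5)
Your strategy matches the paper's: its proof only notes that \eqref{ginnerabtoa} uses nothing beyond $1<b<a$. Your two pairings are a correct verification that the expression in \eqref{ginnerabtoa} is negative there. The first pair gives numerator $-\sqrt{b^2-1}$, and the second uses $d^4\sqrt{a^2-1}<a^5$.

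\textbf{The gap is in the derivative.} With your setup, $g_{inner}=x_1+y$ and $x_1=d/a^3-1/d^2$ for $d>0$, differentiation gives
\[
\frac{\partial}{\partial a}\Bigl(\frac{d}{a^3}-\frac{1}{d^2}\Bigr)=\frac{1}{a^2\sqrt{a^2-1}}-\frac{3d}{a^4}+\frac{2a}{\sqrt{a^2-1}\,d^3}.
\]
This is exactly the negative of what you wrote. Your own bounds then show $\partial(x_1+y)/\partial a>0$ on $D_2$, the opposite of the claim. As a sanity check, $x_1\to-\infty$ as $a\to b^+$, and $-1/d^2$ increases with $a$. Since the lemma is purely a sign statement, this error changes the conclusion.

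The source of the confusion is a sign inconsistency in the paper. The definition \eqref{gab} reads $g=x_1+y$, but every explicit formula the paper actually uses is that of $-(x_1+y)$. This includes \eqref{ginner2b}, the formula for $g_{inner}(\sqrt2,b)$, the numerator $\check g_{inner}$ with limit $+4b^6$ as $a\to b^+$, and \eqref{ginnerabtoa} itself. For instance, at $a=2$,
\[
x_1+y=\frac{\sqrt3}{8}-\frac{3\sqrt{b^2-1}}{8}-\frac{1}{(\sqrt3-\sqrt{b^2-1})^2}+\frac{2\sqrt{b^2-1}}{b^3},
\]
which is the negative of \eqref{ginner2b}.

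To fix the proof, set $g_{inner}:=-(x_1+y)=\frac{1}{d^2}-\frac{d}{a^3}-y$ explicitly. This has the same zero set, and it is the convention under which Lemma~\ref{innerlemma} and the paper's limit claims are consistent. Differentiating this function gives your displayed formula, and your pairings then finish the proof exactly as the paper does. As written, however, the claim that differentiating $x_1+y$ ``agrees with \eqref{ginnerabtoa}'' is false. The agreement is an artifact of your sign error coinciding with the paper's inconsistent convention.
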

\begin{proof}
From \eqref{ginnerabtoa} we know $\frac{\partial g_{inner}(a,b)}{\partial a}<0$ holds for $\{(a,b)\vert 1<b<a\}\supset D_2$. 
\end{proof}

\begin{lemma}\label{innerlemma}
For any $b\in I_3$, there exists a unique $a=a_{inner}(b)$ such that 
\begin{enumerate}[label=\arabic*)]
		\item $g_{inner}(a,b)>0$ for $ \max\{b,\sqrt{2}\}<a<a_{inner}(b)$,
		\item $g_{inner}(a_{inner}(b),b)=0$, and 
		\item $g_{inner}(a,b)<0$ for $ a_{inner}(b)<a<\sqrt{2}+\sqrt{6}$.
	\end{enumerate}
\end{lemma}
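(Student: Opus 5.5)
The plan mirrors Lemma \ref{outerlemma}, with the roles of $a$ and $b$ exchanged and the inequalities reversed. Fix $b\in I_3=(\beta_3,\beta_4)$ and regard $a\mapsto g_{inner}(a,b)$ on the interval $(\max\{b,\sqrt{2}\},\sqrt{2}+\sqrt{6})$. By Lemma \ref{innerpartialg}, $\partial g_{inner}/\partial a<0$ on $D_2$, so this map is strictly decreasing. Hence it has at most one zero, and items 1) and 3) follow once a zero $a_{inner}(b)$ exists. Existence will come from the Intermediate Value Theorem, so the real work is to show two sign facts: $g_{inner}>0$ near the left endpoint and $g_{inner}<0$ near the right endpoint.

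For the left endpoint I split into two cases.

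\emph{Case $b<\sqrt{2}$.} The endpoint is $a=\sqrt{2}$, where $\sqrt{a^2-1}=1$. Since $b<\beta_4<\sqrt 2$, we have $\sqrt{b^2-1}<1$, so $g_{inner}(\sqrt{2},b)$ is an explicit one-variable function of $b$. I would show it is positive for $b\in(\beta_3,\beta_4)$. The term $-(\sqrt{a^2-1}-\sqrt{b^2-1})^{-2}$-type contribution $(\sqrt{a^2-1}-\sqrt{b^2-1})(|\cdot|-a)\check g_1$ is negative because $|\cdot|<a$. The term $(2-b)\check g_2$ is positive. I would try to bound these directly, and fall back on interval arithmetic over a subdivision of $[\beta_3,\beta_4]$ if a clean estimate fails.

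\emph{Case $b\ge\sqrt{2}$.} The endpoint is $a\to b^+$. Then $\sqrt{a^2-1}-\sqrt{b^2-1}\to0^+$, and the first term of \eqref{gab} behaves like $-a\cdot a^2/(a^3\,|\cdot|^{2})\to-\infty$. This looks like the wrong sign, so the lemma only makes sense if the branch for $b\ge\sqrt 2$ is handled inside $D_{inner_2}$.

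Reading \eqref{gab} more carefully, the first term equals $d(d-a)\check g_1$ with $d=\sqrt{a^2-1}-\sqrt{b^2-1}>0$. As $d\to0^+$ this is approximately $-a\cdot a^2/(a^3 d^2)\to-\infty$. So $g_{inner}\to-\infty$ as $a\to b^+$, and for $b$ near $\sqrt 2$ the claimed positivity fails near $a=b$. The intended interpretation is therefore that the relevant $a$ exceeds $\sqrt{2}$, and that for $b\in I_3$ one compares against $a$ near $\sqrt2$ only where $d$ stays bounded away from $0$. Concretely, I would verify $g_{inner}(\max\{b,\sqrt2\}^+,b)>0$ via the limit/evaluation at $a=\sqrt2$, which requires $b<\sqrt2$ to keep $d>0$. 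Since $\beta_4\approx1.517>\sqrt2$, the subrange $b\in[\sqrt2,\beta_4)$ needs separate treatment. There I would use the decreasing monotonicity together with the known zero locus (Figure \ref{inner_rec}) and a rigorous interval enclosure of the curve $g_{inner}=0$, via the Krawczyk operator, to confirm that a sign change occurs in $a>b$.

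For the right endpoint I compute $g_{inner}(\sqrt{2}+\sqrt{6},b)$, where $\sqrt{a^2-1}=2+\sqrt3$. There $d=2+\sqrt3-\sqrt{b^2-1}$, and since $b<\beta_4$ we have $d<a=\sqrt2+\sqrt6$, making the first term negative. The term $(2-b)\check g_2$ is positive but small, at most about $0.5$, while the first term is of order $-(a-d)\cdot d^{-2}$-ish plus $d/a^3\cdot d\ldots$. I would bound both explicitly, and otherwise use interval arithmetic on $b\in[\beta_3,\beta_4]$ to show negativity.

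The main obstacle is this endpoint sign verification uniformly in $b$, especially near $b\approx\sqrt2$ where the left-endpoint behaviour changes. I expect to rely on a rigorous interval subdivision of $I_3$ there. Once both signs are in place, the Intermediate Value Theorem gives existence, strict monotonicity gives uniqueness, and the Implicit Function Theorem gives differentiability of $a_{inner}(b)$.
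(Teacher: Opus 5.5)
Your skeleton is the paper's: strict monotonicity in $a$ from Lemma \ref{innerpartialg}, endpoint signs, then the Intermediate Value Theorem and the Implicit Function Theorem. The endpoint step fails, however, because you mix two opposite sign conventions. The function this lemma is about appears in \eqref{ginner2b}, in \eqref{ginnerabtoa}, and in the paper's $\check g_{inner}$, and it is the negative of the literal expression $x_1+y$ in \eqref{gab}. With $d=\sqrt{a^2-1}-\sqrt{b^2-1}>0$ it reads $g_{inner}=\frac{1}{d^2}-\frac{d}{a^3}-\frac{(8-b^3)\sqrt{b^2-1}}{4b^3}$. Differentiating this in $a$ gives exactly \eqref{ginnerabtoa}, which is where $\partial g_{inner}/\partial a<0$ comes from.

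You take the decreasing monotonicity from this convention but compute endpoint signs from \eqref{gab}, whose $a$-derivative is positive. That is why you get $g_{inner}\to-\infty$ as $a\to b^+$ and conclude that the lemma fails for $b\in[\sqrt2,\beta_4)$. Your fallback for that subrange cannot work on its own terms. A strictly decreasing function with limit $-\infty$ at the left endpoint is negative on the whole interval and has no zero, so no Krawczyk enclosure of the zero curve could exhibit a sign change. The missing step is one line: the singular term $1/d^2$ drives $g_{inner}\to+\infty$ as $a\to b^+$. The paper writes $g_{inner}=\check g_{inner}/(4a^3b^3d^2)$ and checks $\lim_{a\to b^+}\check g_{inner}(a,b)=4b^6>0$, so no separate treatment of $[\sqrt2,\beta_4)$ is needed.

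The same mix-up would break your other two checks.
\begin{itemize}
\item At $a=\sqrt2$, $b<\sqrt2$: the literal \eqref{gab} gives $g(\sqrt2,b)<0$ (about $-2.64$ at $b=1.1$), so you cannot prove positivity from that expression. The positive quantity is the paper's explicit $g_{inner}(\sqrt2,b)$. Also, $\beta_4\approx1.517>\sqrt2$, so ``$b<\beta_4<\sqrt2$'' in your first case is false; you only need $b<\sqrt2$ there.
\item At $a=\sqrt2+\sqrt6$: your planned estimate goes the wrong way. For $b\in I_3$ the first term $\frac{d^3-a^3}{a^3d^2}$ of \eqref{gab} lies roughly in $(-0.11,-0.01)$, while $(2-b)\check g_2$ is at least about $0.17$. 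So the literal expression is positive there. The negativity in item 3) holds for the negated function, and the paper verifies it by interval arithmetic.
\end{itemize}

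Fix one convention throughout. Either use the paper's, or keep \eqref{gab} and reverse the monotonicity and all endpoint signs, which leaves the zero set unchanged. Then the three facts---positivity at $a=\sqrt2$ for $b<\sqrt2$, the $4b^6$ limit for $b\ge\sqrt2$, and negativity at $a=\sqrt2+\sqrt6$---combine with Lemma \ref{innerpartialg} exactly as in the paper.
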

\begin{proof}
By direct computation, we have
\begin{equation*}
	g_{inner}(\sqrt{2},b)=\frac{1}{4} \left((\sqrt{2} +1)\sqrt{b^2-1}+\frac{4}{\left(\sqrt{b^2-1}-1\right)^2}-\frac{8 \sqrt{b^2-1}}{b^3}-\sqrt{2}\right)>0
\end{equation*}
holds for $b\in I_3\backslash\{\sqrt{2}\}$. When $a\to b^+$, we consider the limitation of 
\begin{equation*}
	g_{inner}(a,b)=\frac{\check g_{inner}(a,b)}{4 a^3 b^3 \left(\sqrt{a^2-1}-\sqrt{b^2-1}\right)^2},
\end{equation*}
where $\check g_{inner}(a,b)=4 a^3 b^3+\sqrt{a^2-1} (-2 a^3 b^5+2 a^3 b^3+16 a^3 b^2-16 a^3-4 a^2 b^3-12 b^5+16 b^3)+\sqrt{b^2-1} (a^5 b^3-8 a^5+a^3 b^5-2 a^3 b^3-8 a^3 b^2+16 a^3+12 a^2 b^3+4 b^5-16 b^3)$. Without much difficulty, we have $\lim\limits_{a\to b^+}\check g_{inner}(a,b)=4b^6>0$, which implies 
\begin{equation*}
	\lim\limits_{a\to b^+} g_{inner}(a,b)>0.
\end{equation*}
At the same time, we have 
\begin{equation*}
	g_{inner}(\sqrt{2}+\sqrt{6},b)
	=\frac{\sqrt{b^2-1}-(2+\sqrt{3})}{\left(\sqrt{2}+\sqrt{6}\right)^3}+\frac{1}{(2+\sqrt{3}-\sqrt{b^2-1})^2}+\frac{\left(b^3-8\right) \sqrt{b^2-1}}{4 b^3}<0
\end{equation*}
holds for $b\in I_3$ by the interval arithmetic method and the Krawczyk operator. Hence, there exists at least one $a\in (\max\{b,\sqrt{2}\},\sqrt{2}+\sqrt{6})$ such that $g_{inner}(a,b)=0$.
In addition, from Lemma \ref{innerpartialg}, for any $(a,b)\in D_2$ we have $\partial g_{inner}(a,b)/\partial a<0$. Then from the Implicit Function Theorem, $a= a_{inner}(b)$ is a differential function with respect to $b\in I_3$. One can see the red and the green dashed curves in Figure \ref{innercase}.
\end{proof}

\begin{figure}[htbp]
	\begin{minipage}{0.49\textwidth}
	\centering
		\includegraphics[width=\textwidth]{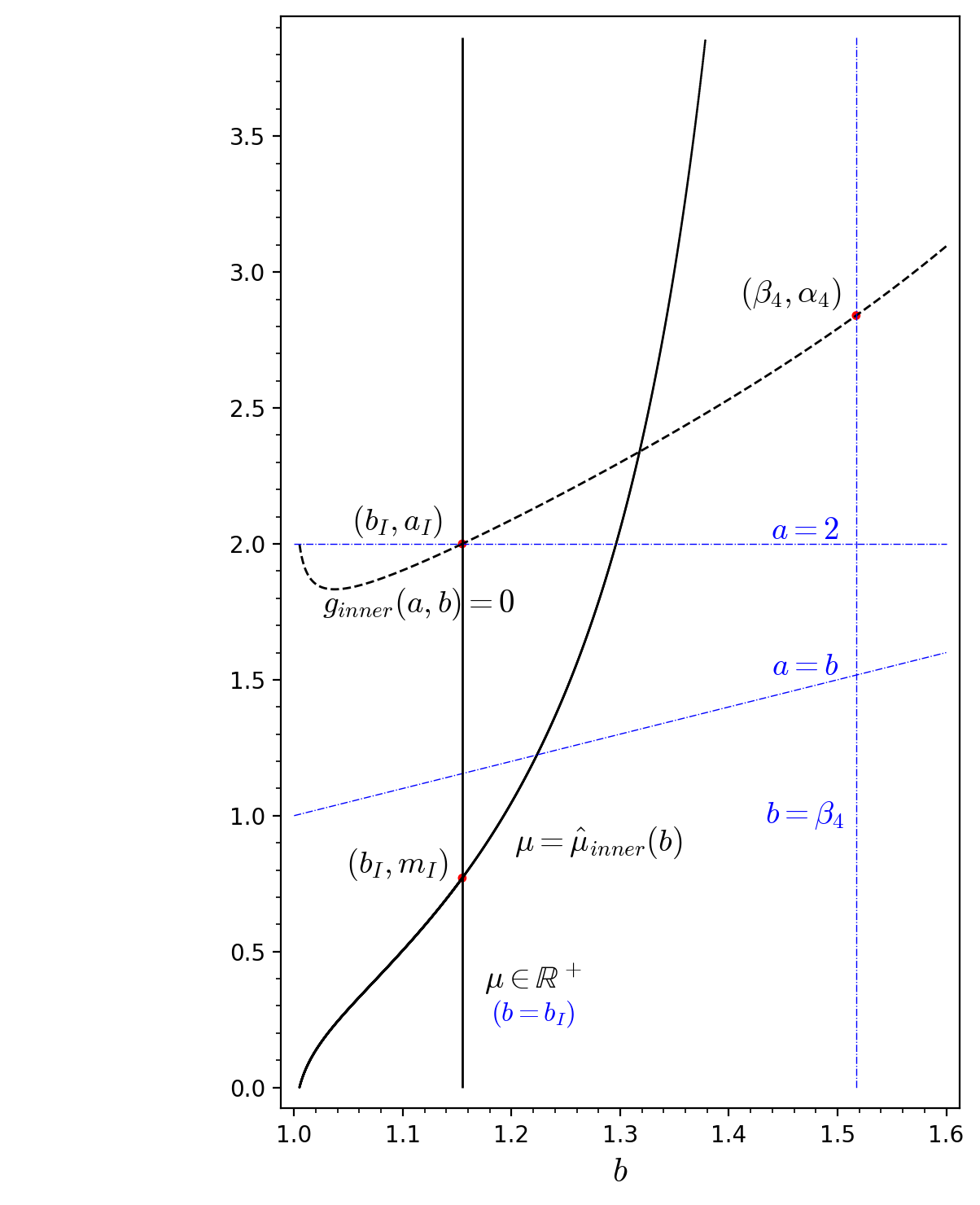}
	\end{minipage}
	\hfill
	\begin{minipage}{0.45\textwidth}
	\centering
		\caption{The function $a=a_{inner}(b)$ (the black dashed curve) and $\mu=\hat \mu_{inner}(b)$ (the black solid curve) with $b\in I_3$. The black vertical line denotes the ordinary solution, where $m_1,m_2,m_3$ form an equilateral triangle centered at $\mu\in (0,+\infty)$. These two curves intersect when $\mu=m_I$. The blue dashed curve denotes $h_{inner}(a,b)=0$, and it intersects $g_{inner}(a,b)=0$ at $(a,b)=(2,2/\sqrt{3})$ and $(a,b)=(\alpha_4,\beta_4)$.}
	\label{innercase}
	\end{minipage}
\end{figure}
From Lemma \ref{innerlemma} we know that $a=a_{inner}(b)$ is uniquely determined by $b\in I_3$. By substuting $a=a_{inner}(b)$ into the first expression in \eqref{innereqs} we obtain
\begin{equation*}
	\mu_{inner}( a_{inner}(b),b)=\hat \mu_{inner}(b)=\frac{b^3(8-a_{inner}^3)\sqrt{a_{inner}^2-1}(\sqrt{a_{inner}^2-1}-\sqrt{b^2-1})^2}{4 a_{inner}^3 ((\sqrt{a_{inner}^2-1}-\sqrt{b^2-1})^3-b^3)}>0,
\end{equation*}
and this implies $\mu_{inner}=\hat \mu_{inner}(b)$ is a positive differential function with respect to the only variable $b\in I_3$.

\subsection{The proof of Theorem \ref{th2}: Part 2}\label{prfth2-2}
We would like to prove that $\hat \mu_{inner}(b)$ is increasing, namely, for  $b\in I_3\backslash\{2/\sqrt{3}\}$ we have
\begin{equation*}
	\begin{aligned}
		\frac{{\rm d}  \hat \mu_{inner}(b)}{{\rm d} b}
		=&\frac{\partial \mu_{inner}}{\partial b}-\frac{\partial  \mu_{inner}}{\partial a}\cdot \frac{\frac{\partial g_{inner}}{\partial b}}{\frac{\partial g_{inner}}{\partial a}}=\frac{1}{\frac{\partial g_{inner}}{\partial a}}\cdot \begin{vmatrix}
			\frac{\partial \mu_{inner}}{\partial b}&\frac{\partial \mu_{inner}}{\partial a}\\
			\frac{\partial g_{inner}}{\partial b}&\frac{\partial g_{inner}}{\partial a}
		\end{vmatrix}>0.
	\end{aligned}
\end{equation*}
It is equivalent to proving 
\begin{equation}\label{dhatmdb}
\frac{{\rm d}  \hat \mu_{inner}(b)}{{\rm d} b}\cdot \frac{\partial g_{inner}}{\partial a}=
	\begin{vmatrix}
			\frac{\partial \mu_{inner}}{\partial b}&\frac{\partial \mu_{inner}}{\partial a}\\
			\frac{\partial g_{inner}}{\partial b}&\frac{\partial g_{inner}}{\partial a}
		\end{vmatrix}=\frac{\tilde f(a,b)}{\tilde {\tilde f}(a,b)}<0, 
\end{equation}
since from \eqref{ginnerabtoa} we have $\frac{\partial g_{inner}}{\partial a}<0$ when $1<b<a$.
To avoid possible singularities caused by the denominator $\tilde {\tilde f}(a,b)=16 a^7 \sqrt{a^2-1} b \sqrt{b^2-1} (\sqrt{a^2-1}-\sqrt{b^2-1})(4 (\sqrt{a^2-1}-\sqrt{b^2-1})+b^2 (-3 \sqrt{a^2-1}+\sqrt{b^2-1}+b)-a^2 (\sqrt{a^2-1}-3 \sqrt{b^2-1}))^2>0$, we want to show $\tilde f(a,b)<0$ when $b\in I_3/\backslash\{2/\sqrt{3}\}$. We denote by 
$$r_I=(a_I,b_I)=(2,2/\sqrt{3}).$$
Then we find that $\tilde f(r_I)=0$. We continue to show that $r_I$ is the unique solution for
\begin{equation}\label{dminner}
	\left\{
\begin{aligned}
0=&\tilde f(a,b),\\
0=&\tilde g(a,b),
\end{aligned}
\right.
\end{equation}
where $\tilde g(a,b)=g_{inner}(a,b)\cdot (4 a^3 b^3 \left(\sqrt{b^2-1}-\sqrt{a^2-1}\right)^2)$ is the numerator of $g_{inner}(a,b)$. 
We see 
$$\tilde f(a,b)=\tilde f(a_{inner}(b),b)$$ 
a differential function with respect to $b$ and consider its derivative. Similar to \eqref{dhatmdb}, we denote by 
\begin{equation*}
	\frac{{\rm d}\tilde f(a,b)}{{\rm d}b}\cdot \frac{\partial \tilde g(a,b)}{\partial a}=
	\begin{vmatrix}
		\frac{\partial \tilde f}{\partial b}&\frac{\partial \tilde f}{\partial a}\\
		\frac{\partial \tilde g}{\partial b}&\frac{\partial \tilde g}{\partial a}\\
	\end{vmatrix}=\frac{\tilde h(a,b)}{\sqrt{a^2-1}\sqrt{b^2-1}}.
\end{equation*}
We show that the following system possesses a unique solution $r=r_I$
\begin{equation}\label{tildeh=0}
	\left\{
\begin{aligned}
0=&\tilde h(a,b),\\
0=&\tilde g(a,b).
\end{aligned}
\right.
\end{equation}
It is easy to check that $r=r_I$ is a solution for \eqref{tildeh=0}. 
We use the interval arithmetic and the Krawczyk operator to show that it is unique.
After a somewhat tedious process of ruling out most intervals, we locate the solution in $(a,b)\in D_2=[1.999,2.001]\times [1.154,1.155]$. By dividing each 1-dim interval into 50 pieces, we find the only solution in a small 2-dim interval shown as 
\begin{equation*}
	\left\{
	\begin{aligned}
		a =&2.00000416563379439085804844?\\
		b=&1.15470274245007469162818378?
	\end{aligned}
	\right..
\end{equation*}
Since $r_I$ is already a solution in this interval, we say that it is the unique one. 
Substituting $b_3=1.1$ and $b_4=1.3$ into $\tilde g_{inner}(a,b)=0$ and solving the equation for $a$ respectively, we obtain two intervals for the solutions to exist
\begin{equation*}
	\begin{aligned}
		A_3=&[1.90274898693561,1.90274898693562],\\ % (+,-)
		A_4=&[2.29936985134914,2.29936985134915]. % (+,-)
	\end{aligned}
\end{equation*}
By direct computation and \eqref{ginnerabtoa} we have 
\begin{equation*}
	\begin{aligned}
		\tilde h(A_3,1.1)=&7.020915634079450852084143\times 10^7?>0,\\
		\tilde h(A_4,1.3)=&-1.7768962931900663749738920\times 10^{10}?<0. 
	\end{aligned}
\end{equation*}
This implies $b=2/\sqrt{3}$ is a maximum for $\tilde f(a_{inner}(b),b)$. Since $\tilde f(r_I)=0$, we have $\tilde f(a,b)<0$ holds for all $(a,b)\in D_{inner}\backslash\{(2,2/\sqrt{3})\}$. 
Finally, we conclude that \eqref{dhatmdb} holds for all $(a,b)\in D_{inner}\backslash\{(2,2/\sqrt{3})\}$. 
This implies that $\mu=\hat \mu_{inner}(b)$ is increasing with respect to $b$. One can see the solid black curve labeled $\mu_{inner}=\hat \mu_{inner}(b)$ in Figure \ref{innercase} for illustration. In addition, when $r=r_{I}$, we have $\mu_{inner}\in \mathbb{R}^+$ from the expression in \eqref{mab}, and one can see the solid black vertical line in Figure \ref{innercase}. Both curves intersect at $(b,\mu)=(b_I,m_I)$, where $m_I=\frac{64\sqrt{3}+81}{249}$. 

In fact, if we see $\mu$ as a parameter and transform \eqref{mab} into $j(\mu,a,b)=\mu\cdot y_1-x=0$, then by solving the equation
\begin{equation}\label{jmab}
	\frac{{\rm d}j(\mu,a,b)}{{\rm d}b}\Big|_{(a,b)=r_I}=0
\end{equation}
we obtain $\mu=m_I$.
Then we have 
\begin{enumerate}
	\item For any given $\mu\in \mathbb{R}^+\backslash\{m_I\}$, there are exactly two central configurations. One possesses an isosceles triangle shape, and the other an equilateral triangle shape.
	\item When $\mu=m_I=\frac{64\sqrt{3}+81}{249}$, there is a unique central configuration of an equilateral triangle shape, and it is also the intersection of the two mass curves.
	%\item If $m=0$ is admissible, there are two central configurations according to 3.3) in Lemma \ref{rangeab}.
\end{enumerate}

\subsection{Bifurcation for the inner case in the reduced subspace}\label{innerb}
We use some notations in Section \ref{prfth2-2}. We see $\mu$ as a parameter and consider the following equations to avoid the possible singularities
\begin{equation*}
	H_{inner}(r,\mu)=(j(\mu,a,b),\tilde g(a,b))=0.
\end{equation*}
The Jacobian of $H_{inner}$ with respect to $r$ is 
\begin{equation*}
J_{I}=
	\begin{bmatrix}
		p_{11}&p_{12}\\
		p_{21}&p_{22}
	\end{bmatrix}
	=\begin{bmatrix}
		\mu\frac{\partial y_1}{\partial a}-\frac{\partial x}{\partial a}&\mu\frac{\partial y_1}{\partial b}\\
		\frac{\partial \tilde g}{\partial a}&\frac{\tilde g}{\partial b}
	\end{bmatrix}.
\end{equation*}
From \eqref{jmab} we know that the solution of $\det J_{I}=0$ is the same as the solution of \eqref{dminner}, namely $r_I=(a_I,b_I)$ with $\mu=m_I$.
The eigenvectors of the eigenvalue $0$ corresponding to $J_{I}$ and $J_{I}^T$ can be chosen as 
\begin{equation*}
	v_{I}=[-p_{12},p_{11}]^T \text{ and } w_{I}=[-p_{21},p_{11}]^T.
\end{equation*}
$\partial H_{inner}/\partial \mu=[y_1,0]^T$. Substituting $(r_{I},m_{I})$ into $J_{I}$ we have 
\begin{equation*}
	J_{I}(r_{I},m_{I})=0.
\end{equation*}
Then we have 
\begin{equation*}
\left\{
	\begin{aligned}
		w_{I}^T\cdot \frac{\partial H_{inner}}{\partial \mu}(r_{I},m_{I})=&-p_{21}(r_{I},m_{I})\cdot y_1(r_{I},m_{I})=0,\\
		w_{I}^T\cdot D^2H_{inner}(r_{I},m_{I})(v_{I},v_{I})=&w^T\cdot[D^2( j(\mu,r))(v_{I},v_{I}),D^2\tilde g(r)(v_{I},v_{I})](r_{I},m_{I})\\
		=&-678.751441394201336948542495?\neq 0,
	\end{aligned}
\right. 
\end{equation*}
which implies $r_{I}$ is a transcritical bifurcation point.

\section{Numerical analysis for the bifurcations in the whole planar 4-body configuration space}\label{secbif}
In this section, we present global images of the bifurcations in both cases, shown in Figures \ref{outerbif} and \ref{innerbif}, based on prior studies and numerical results. 
To illustrate it clearly, we always set $m_1=m_2=m_3=1$, $m_4=\mu$ and $q_1=(-1,0),q_2=(1,0)$. 
The only two bifurcation points found in Section \ref{outerb} and \ref{innerb} are also bifurcations in the whole planar 4-body configuration space, which are denoted by $\mathcal{F}$ and $\mathcal{T}$ respectively.

%\subsection{The outer case}
In the outer case, except for the fold bifurcation point $\mathcal{F}$ already found in \cite{bernat2009} (denoted by $m^\ast$ in the literature), there exists another bifurcation point $\mathcal{P}$ of a supercritical pitchfork type with $\mu=m_{P}\approx 0.991842274390941$ found in \cite{rusu2016} (denoted by $m_{\ast\ast}$). 
Both bifurcation points can be obtained by perturbing from the concave equilateral triangle central configuration of the equal mass case $\mathcal{E}_1$ in two directions of $\mu$ according to \cite{rusu2016}.
With $\mu$ increasing from zero to $\infty$, as shown in Figure \ref{outerbif}, $\mathcal{P}$ appears first, and it gives rise to the asymmetric central configurations. The symmetric kite central configurations persist as $\mu$ increases, and coalesce into a single one at $\mu=m_O$. Then it ceases to exist. The whole process is illustrated in Figure \ref{outerbif}, with the detailed numerical values listed in Table \ref{outervalues}.

When $\mu\to +\infty$, it is equivalent to considering the relative equilibria of the $(1+3)$-body problem, in which the large mass is usually set to be $\mu=1$ and each trivial mass is set to be $m_i=\mu_i \cdot\varepsilon+O(\varepsilon^2)>0$ for $i=1,2,3$, with $\mu_i>0$ a fixed constant and $\varepsilon\to 0$. 
According to \cites{hall1988,moeckel1997}, these trivial masses will form clusters during the asymptotical approach, and they must converge to the circle centered at the only large mass with radius determined by this mass value. During the process, a collision may or may not occur. If there is no collision, namely, each trivial mass itself form a cluster, then from the study in \cites{hall1988, moeckel1997,corbera2015,casasayas1994} and up to the permutation of the vertices, there are three types of the relative equilibria of the $(1+3)$-body problem \cites{corbera2015,hall1988,casasayas1994} when $\mu_1=\mu_2=\mu_3$: one is convex, and the rest two belong to the two families in the concave inner case respectively, which are denoted by $\mathcal{W}_1$ and $\mathcal{W}_2$ in Figure \ref{innerbif} and mentioned in Remark \ref{alphabeta4}. 

While based on the numerical computation, we find that when $\mu\to+\infty$ (or equivalently $m_4/\mu=1$, and $m_1/\mu=m_2/\mu=m_3/\mu$ tend to 0), the limiting position for the asymmetric central configurations contains collision, which is not discussed in detail in the studies mentioned above. By our observation, the limiting positions of the four bodies form an equilateral triangle, with two of the zero masses colliding on the circle centered at the large mass. The process is shown in the asymmetric branch in Figure \ref{outerbif}.
%\deleted{If we keep $q_1=(-1,0)$, $q_2=(1,0)$ and set $m_1=1$, $m_2=m_3=m_4=\mu$, we see clearly that the positions of $m_3$ and $m_4$ tend to $(0,\sqrt{3})$ from the inside and from the outside of the circle centered at $q_1$, with radius $r_{12}=2$. }%The limit positions for the points form an equilateral triangle, as shown in Figure \ref{minfinite}. 

For the inner case, the two families of the concave kite central configurations, i.e., the equilateral and the isosceles triangle families, are shown in Figure \ref{innerbif}, with the numerical values of $q_3$ and $q_4$ of the isosceles family listed in Table \ref{innervalues}. 

% This is the basic assumption by Moeckel in \cite{moeckel1997} when considering the clusters of the infinitesimal masses.
%  It is well known that when $\mu_1=\mu_2=\mu_3$, 
%The limiting collision central configuration shown in Figure \ref{outerbif} and \ref{vars23_total} was mentioned in \cite{hall1988,moeckel1997} but not discussed in detail. We will elaborate further in Section \ref{secbif}.

%it turns to the study of the $(1+3)$-body problem, which has been discussed or mentioned in 
%

%Precisely in this limit case, two of the three trivial masses form clusters, and the limit shape of this $(1+3)$-body problem is an equilateral triangle with $m_3$ colliding with $m_1$ (or $m_2$); one can see $\mathcal{I}_3$ and $\mathcal{I}_4$ in Figure \ref{outerbif}. 
%If we fix the large mass $m_4$ at $(-1,0)$ and $m_1$ at $(1,0)$, and iterate from the equal mass case, it is clearer to see numerically that the positions of $m_2$ and $m_3$ tend to $(0,\sqrt{3})$ from different directions when $m=m_4$ tends to $\infty$. This is a point on the circle centered at $m_4$ with radius $r_{14}=2$, and the limit shape is an equilateral triangle; see Figure \ref{vars23_total}. 

\begin{figure}[htpb]
    \centering
    \includegraphics[width=\textwidth]{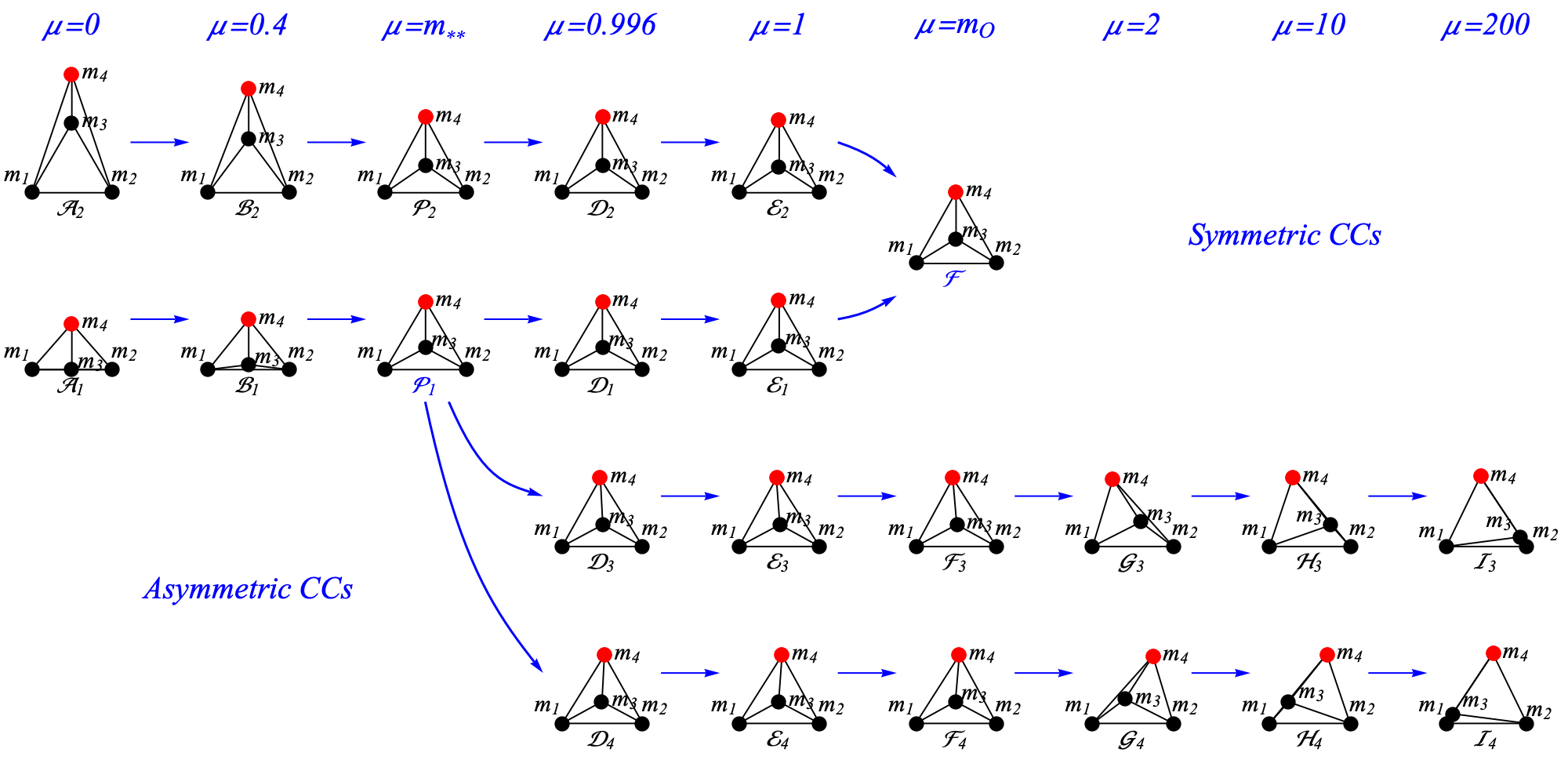}
    \caption{The global bifurcation picture for the outer case, including both symmetric and asymmetric central configurations. With $\mu$ increasing from zero to $\infty$, two bifurcation points appear successively. The symmetric kite central configurations coalesce into $\mathcal{F}$ and then cease to exist. The pitchfork point $\mathcal{P}$ gives rise to the asymmetric central configurations, and the limit position forms an equilateral triangle, with two of the small masses colliding with each other on the circle centered at the large mass. $\mathcal{E}_3$ corresponds to $\mathcal{E}_2$ with its masses permuted, hence its symmetry axis is not the $y$-axis.}
    \label{outerbif}
\end{figure}

%\begin{figure}[htbp]
%	\begin{minipage}{0.65\textwidth}
%	\centering
%		\includegraphics[width=\textwidth]{minfinite.png}
%	\end{minipage}
%	\hfill
%	\begin{minipage}{0.35\textwidth}
%	\centering
%		\caption{We keep $q_1$ and $q_2$ still fixed on the $x$-axis symmetrically, and set $m_1=1$, $m_2=m_3=m_4=\mu$. Then the numerical asymptotic approach for the two trivial masses $m_4$ (in blue) and $m_3$ (in black) to the collision point $(0,\sqrt{3})$ is clearly shown as $\mu\to0^+$. The dashed red curve denotes part of the circle with radius 2 centered at $m_1$. The number of iterations is around 1000.}
%    \label{minfinite}
%	\end{minipage}
%\end{figure}
%

%\subsection{The inner case}
\begin{figure}[htpb]
    \centering
    \includegraphics[width=\textwidth]{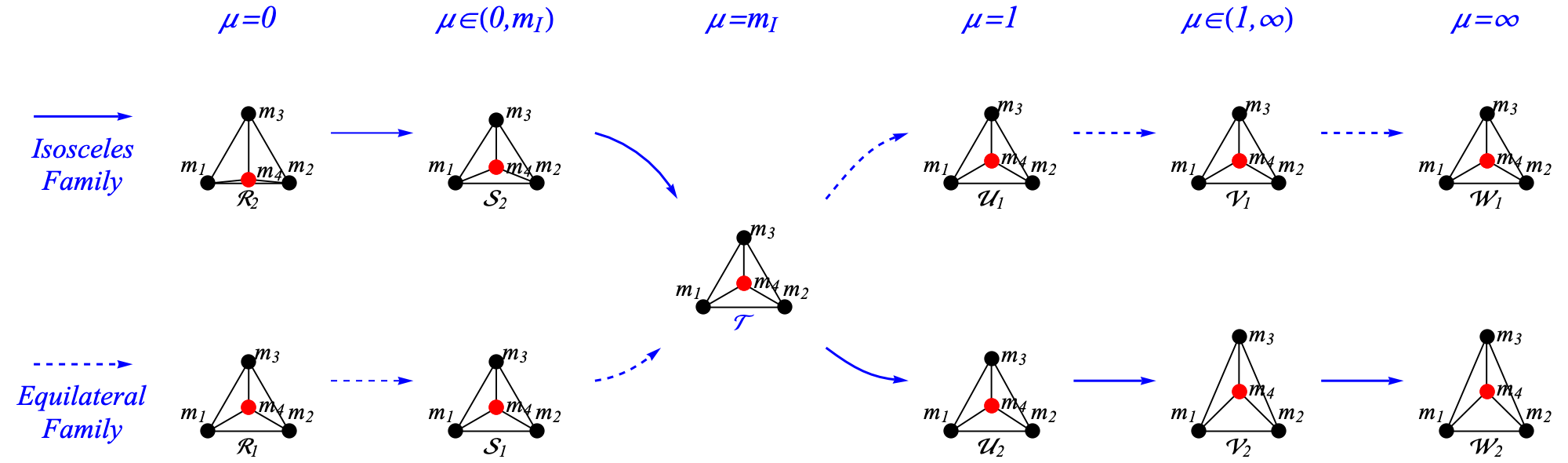}
    \caption{The global bifurcation for the inner case. There are two families, the isosceles triangle family and the equilateral triangle family, and they intersect at $\mathcal{T}$ when $\mu =m_I$, which is a transcritical bifurcation point.}
    \label{innerbif}
\end{figure}

 \counterwithin{table}{section}
\begin{table}[htbp]
\centering
\begin{tabular}{@{}cc|c c@{}}
\toprule
$\mu_{outer}$ & CCs & $q_3$ &  $q_4$  \\
\midrule
\multirow{2}{*}{0} 
& $\mathcal{A}_1$ & $(0,0)$ & $(0,1.1394282250)$ \\
 & $\mathcal{A}_2$ & $(0,\sqrt{3})$ & $(0,2.9373470790)$ \\
\cline{1-4}
\multirow{2}{*}{0.4} 
& $\mathcal{B}_1$ & $(0,0.1216979505)$ & $(0,1.2430125163)$ \\
 & $\mathcal{B}_2$ & $(0,1.3363457886)$ & $(0,2.5666495320)$ \\
\cline{1-4}
\multirow{2}{*}{$m_{P}$} 
&  \textcolor{blue}{$\mathcal{P}_1$} 
& $(0,0.5412989378)$ & $(0,1.6901724387)$ \\
 & $\mathcal{P}_2$ & $(0,0.6887261772)$ & $(0,1.8619952062)$  \\
\cline{1-4}
\multirow{4}{*}{0.996} 
& $\mathcal{D}_1$ & $(0,0.5566204325)$ & $(0,1.7079477108)$ \\
 & $\mathcal{D}_2$ & $(0,0.6722321579)$ & $(0,1.8427516451)$ \\
 & $\mathcal{D}_3$ & $(0.0101779325, 0.5418766314)$ & $(-0.0502922032, 1.6898418940)$ \\
 & $\mathcal{D}_4$ & $(-0.0101779325, 0.5418766314)$ & $(0.0502922032, 1.6898418940)$   \\
\cline{1-4}
\multirow{4}{*}{1} 
& $\mathcal{E}_1$ & $(0,\sqrt{3}/3)$ & $(0,\sqrt{3})$ \\
 & $\mathcal{E}_2$ & $(0,0.6503784730)$ & $(0,1.8172393947)$ \\
 & $\mathcal{E}_3$ & $(0.0142776898, 0.5424284291)$ & $(-0.0702774958, 1.6895283608)$ \\
 & $\mathcal{E}_4$ &  $(-0.0142776898, 0.5424284291)$ & $(0.0702774958, 1.6895283608)$   \\
\cline{1-4}
\multirow{3}{*}{$m_O$} 
& \textcolor{blue}{$\mathcal{F}$} & $(0,0.6134917485)$ & $(0,1.7741773900)$ \\
 & $\mathcal{F}_3$ & $(0.0164579892, 0.5427933036)$&   $(-0.0808016101, 1.6893222180)$\\
 & $\mathcal{F}_4$ &  $(-0.0164579892, 0.5427933036)$&   $(0.0808016101, 1.6893222180)$ \\
 \cline{1-4}
\multirow{2}{*}{2} 
&$\mathcal{G}_3$  & $(0.1921905901, 0.6061149070)$& $(-0.4950243426, 1.6717855273)$  \\
 & $\mathcal{G}_4$ &   $(-0.1921905901, 0.6061149070)$ & $(0.4950243426, 1.6717855273)$ \\
\cline{1-4}
\multirow{2}{*}{10}
 & $\mathcal{H}_3$ & $(0.5236764407, 0.5332602704)$ & $(-0.4191618074, 1.7159869404)$  \\
 & $\mathcal{H}_4$ &   $(-0.5236764407, 0.5332602704)$ & $(0.4191618074, 1.7159869404)$  \\
 \cline{1-4}
\multirow{2}{*}{$200$}
 & $\mathcal{I}_3$ & $(0.8398531867, 0.2391976896)$ & $(-0.1549114842, 1.7325613553)$ \\
 & $\mathcal{I}_4$ & $(-0.8398531867, 0.2391976896)$ & $(0.1549114842, 1.7325613553)$  \\
\bottomrule
\end{tabular}
\caption{Numerical values of $q_3$ and $q_4$ with different $\mu_{outer}$, including both symmetric and asymmetric cases. We set $m_1=m_2=m_3=1$ and $q_1=(-1,0),q_2=(1,0)$.}
\label{outervalues}
\end{table}

\begin{table}[htbp]
\centering
\begin{tabular}{@{}cc|c c@{}}
\toprule
$\mu_{inner}$ & CCs &  $q_3$& $q_4$ \\
\midrule
\multirow{1}{*}{0} 
&$\mathcal{R}_2$& $(0,\sqrt{3})$ &  $(0, 0.0994336508)$  \\
\cline{1-4}
\multirow{1}{*}{0.4} 
 &$\mathcal{S}_2$& $(0, 1.5757582838)$ &  $(0, 0.3968328465)$  \\
\cline{1-4}
\multirow{1}{*}{$m_I$} 
& \textcolor{blue}{$\mathcal{T}$} & $(0,\sqrt{3})$ & $(0,\sqrt{3}/3)$ \\
\cline{1-4}
\multirow{1}{*}{1} 
&$\mathcal{U}_2$& $(0, 1.8172393947)$ &  $(0, 0.6503784730)$  \\
\cline{1-4}
\multirow{1}{*}{5} 
&$\mathcal{V}_2$& $(0, 2.3369706923)$ &   $(0, 0.9858961071)$ \\
\cline{1-4}
\multirow{1}{*}{$+\infty$} 
&$\mathcal{W}_2$& $(0, 2.6580254426)$&  $(0, 1.1409031600)$  \\
\bottomrule
\end{tabular}
\caption{The numerical values of $q_3$ and $q_4$ for the isosceles triangle family in the inner cases derived from the interval method. We set $m_1=m_2=m_3=1$ and $q_1=(-1,0),q_2=(1,0)$.}
\label{innervalues}
\end{table}

\newpage
\section{Appendix: Basic theories}\label{sec2}
%\section{Basic theories}\label{sec2}
This appendix collects the necessary definitions and results on interval arithmetic and bifurcation theory that are used throughout the paper. The following material has been discussed in related work in previous work \cite{liu2026}, and is included here for completeness and ease of reference.
%In this section, we introduce some necessary concepts and conclusions that will be used later. The following aspects have already been mentioned in prior related work \cite{liu2026}; for completeness, we include them here. 
\subsection{The interval arithmetic method based on the Krawczyk operator%(\textcolor{red}{Say more})
}\label{IntK}
Suppose that $F(x)=(F_1(x),\cdots,F_N(x)):D\subset \mathbb{R}^N\to\mathbb{R}^N$ is a  $C^1$ function with $x\in\mathbb{R}^N$, and we use interval analysis to find zeros of the equation
\begin{equation}\label{equsys}
    F(x)=0.
\end{equation} 
The basic theory of the interval arithmetic as well as the Krawczyk operator can be found in \cites{krawczyk1969,alefeld2000,neumaier1990,moore2009}. One can also refer to \cites{moczurad2019,sun2023,rusu2016,liu2026} for specific applications in central configurations from different aspects. Here, we introduce only the necessary basic interval arithmetic. 

A real closed interval is denoted by $[u]=[u_1,u_2]$ with $u_1\leq u_2$. For the case $u_1=u_2$, it is just the single set $\{u\}$. We denote $u$ by an element in this interval as $u\in [u]$. 
The \textit{elementary operations} $\circ\in \Omega :=\{+,-,\cdot,/,\ast\ast\}$ are defined on the set of real intervals by putting 
\begin{equation*}
	[u]\circ [v ]:=  \{u\circ v\vert u\in [u],v\in [v]  \},
\end{equation*}
in which the performance of the division is restricted with $0\not\in v$. Similarly, the exponentiation $u\ast\ast v$ is restricted to one of the cases (i) $u_1>0$, (ii) $u_2\geq0$ and $v_2\geq0$, (iii) $0\not\in [u]$ and $[v]=\{v\}$ with $v\leq 0$ an integer, or (iv) $[v]=\{v\}$ with $v$ a positive integer.
The \textit{element functions} are the members of a predefined set $\Phi$ of real functions, continuous on every closed interval on which they are defined, such as $abs, sqr, sqrt, \exp,\ln, \sin,\cos, \arctan$.
%%%%%%

Suppose that $[x]=[x_{11},x_{12}]\times [x_{21},x_{22}]\times \cdots [x_{N1},x_{N2}]\subset \mathbb{R}^N$ is a closed interval, where $x_{i1},x_{i2}(i=1,\cdots, N)$ are two endpoints of a closed interval in each dimension. We denote by 
$$R(F;[x])= \{F(x)\vert x\in [x]\}=R(F_1;[x])\times \cdots \times R(F_N;[x])$$ 
the range of $F$ over $[x]$ and denote by 
$$F([x])=F_1([x])\times \cdots\times F_N([x]),$$ 
and the final interval under the interval computation. 
A simple but essential property of the interval arithmetic is \begin{equation}\label{intervalproperty}
   R(F;[x]) \subset F([x]).
\end{equation}
The Krawczyk operator is a modified version of the Newton operator based on the Brouwer fixed-point theorem.
We denote by $DF(x)$ the Jacobian matrix of $F$ at $x$. If $DF(x)$ is non-singular, we define the Krawczyk operator of $F$ as below:
\begin{equation}\label{kop}
    K(x_0,[x])=x_0-C\cdot F(x_0) +(Id-C\cdot DF([x])([x]-x_0)),
\end{equation}
with $C\in\mathbb{R}^{N\times N}$ a linear isomorphism. The properties of this operator are necessary
\begin{lemma}\label{koperator}
\begin{enumerate}
\item If $x^\ast \in [x]$ and $F(x^\ast)=0$, then $x^\ast\in K(x_0,[x])$. 
\item If $K(x_0,[x])\subset int[x]$, then there exists exactly one solution of equation $F(x)=0$ in $[x]$. This solution is non-degenerate with $DF(x)$ an isomorphism.
\item If $K(x_0,[x])\cap [x]=\varnothing$, then for all $x\in [x]$ we have $F(x)\neq0$. 
\end{enumerate}
\end{lemma}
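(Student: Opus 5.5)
The plan is to treat this as the classical Krawczyk–Moore theorem and prove all three parts from one mean-value inclusion. Throughout, $[x]$ is a box containing $x_0$, $C$ is nonsingular, and $DF([x])$ is the interval matrix obtained by evaluating the Jacobian entries with interval arithmetic, so that by \eqref{intervalproperty} every entry of $DF$ over $[x]$ lies in the corresponding entry of $DF([x])$.

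\textbf{Key inclusion.} For every $y\in[x]$ I will show
\[
y-C F(y)\in K(x_0,[x]).
\]
Write
\[
y-CF(y)=x_0-CF(x_0)+(y-x_0)-C\bigl(F(y)-F(x_0)\bigr).
\]
Since $[x]$ is convex and $F$ is $C^1$, apply the mean value theorem to each component $F_i$ on the segment $[x_0,y]$. This gives points $\xi_i\in[x]$ with $F_i(y)-F_i(x_0)=DF_i(\xi_i)(y-x_0)$. So $F(y)-F(x_0)=J(y-x_0)$, where the $i$-th row of $J$ is $DF_i(\xi_i)$, and hence $J\in DF([x])$ entrywise. Therefore
\[
y-CF(y)=x_0-CF(x_0)+(Id-CJ)(y-x_0)\in K(x_0,[x]),
\]
by the inclusion property of interval operations, with $y-x_0\in[x]-x_0$.

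\textbf{Parts 1 and 3.} Part 1 is the key inclusion with $y=x^\ast$, since $F(x^\ast)=0$ gives $x^\ast-CF(x^\ast)=x^\ast$. Part 3 is the contrapositive of Part 1: a zero in $[x]$ would lie in $K(x_0,[x])\cap[x]=\varnothing$.

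\textbf{Part 2, existence.} Define $G(y)=y-CF(y)$. By the key inclusion, $G$ maps $[x]$ continuously into $K(x_0,[x])\subset\operatorname{int}[x]\subset[x]$. Brouwer's fixed-point theorem gives $y^\ast$ with $G(y^\ast)=y^\ast$, i.e.\ $CF(y^\ast)=0$, and hence $F(y^\ast)=0$ because $C$ is an isomorphism.

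\textbf{Part 2, uniqueness and nondegeneracy (main obstacle).} I would argue via widths. Let $w=w([x])>0$ be the vector of edge widths, and let $|Id-CDF([x])|$ be the nonnegative matrix of entrywise magnitudes of the interval matrix. The interval term $(Id-CDF([x]))([x]-x_0)$ has width at least $|Id-CDF([x])|\,w$ componentwise; more carefully, at least $|Id-CDF([x])|\,w$ when $x_0$ is the midpoint, and otherwise I would redo the computation with $[x]-x_0$ directly. Strict containment $K\subset\operatorname{int}[x]$ then gives
\[
|Id-CDF([x])|\,w<w
\]
componentwise. Since $w>0$, a Perron–Frobenius / Collatz–Wielandt argument yields $\rho\bigl(|Id-CM|\bigr)<1$ for every real $M\in DF([x])$, and therefore $\rho(Id-CM)<1$. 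Consequently $CM$, and hence $M$, is nonsingular for all such $M$; in particular $DF(x)$ is an isomorphism at the zero.

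For two zeros $x^\ast,y^\ast\in[x]$, the mean value argument gives $0=F(y^\ast)-F(x^\ast)=J(y^\ast-x^\ast)$ with $J\in DF([x])$. Since $J$ is nonsingular, $y^\ast=x^\ast$.

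The delicate point is the width estimate for a non-centered $x_0$. If it becomes troublesome, I would fall back on the standard references \cite{krawczyk1969,neumaier1990} for this step.
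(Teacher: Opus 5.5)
Your proof is correct and is the standard Krawczyk--Moore argument: a mean-value inclusion, Brouwer's theorem for existence, and a width comparison with a Perron--Frobenius (or weighted max-norm) bound for nonsingularity and uniqueness. The paper gives no proof of its own and cites the literature where this argument appears, and you rightly make explicit the tacit hypothesis $x_0\in[x]$, which the mean-value step needs. The step you flag as delicate is not: choosing $b\in[b]$ of maximal modulus gives $b\,[z]\subseteq[b][z]$, hence $w([b][z])\ge|[b]|\,w([z])$ for any interval $[z]$, centered or not. Since widths add under interval addition, $w(K(x_0,[x]))\ge|Id-C\,DF([x])|\,w([x])$ holds for every $x_0\in[x]$, so no fallback to the references is needed.
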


%We design a program to find zeros of \eqref{equsys}.  
%The main idea of the program is to partition the $N$-dimensional interval into sufficiently small pieces and find zeros on these pieces one by one. 
%The property \eqref{intervalproperty} and the definition \eqref{kop}, together with Lemma \ref{koperator}, guarantee that this computer-assisted interval approach is rigid in mathematics. 

\subsection{Bifurcations}\label{bifurcations}
Suppose that
\begin{equation}\label{bifurF}
\begin{aligned}
    F:\mathbb{R}^N\times \mathbb{R}&\to \mathbb{R}^N\\
     (x,\mu)&\mapsto F(x,\mu),x\in \mathbb{R}^N,\mu\in\mathbb{R}
\end{aligned}
\end{equation}
is a smooth map and $\mu$ is a parameter. We follow the notation in Subsection \ref{IntK}, that $DF(x)$ is the Jacobian of $F$ with respect to $x$. We denote by $F_\mu$ the vector of partial derivatives of the components of $F$ with respect to $\mu$. 
When $\mu$ varies, the bifurcations of the system $F(x,\mu)=0$ may occur. The following theorem gives sufficient conditions for certain bifurcations. One can refer to \cites{kuznetsov2004,rusu2016} for more information. 
\begin{theorem}\label{typeofbifur}
    Suppose that $F(x_0,\mu_0)=0$, and the Jacobian matrix $J_F=DF(x_0,\mu_0)$ has a simple eigenvalue $\lambda_{O}=0$ with eigenvector $v$. Denote by $w$ the adjoint eigenvector, i.e., $J_F^T w=0$, where $J_F^T$ is the transpose of $J_F$. 
\begin{enumerate}
    \item If $\left\{\begin{aligned}
            w^TF_\mu(x_0,\mu_0)&\neq0\\
            w^T(D^2F(x_0,\mu_0)(v,v))&\neq0\end{aligned}\right.,$ \\then the system experiences a \textbf{fold bifurcation} at the equilibrium point $x_0$ as the parameter $\mu$ passes through the bifurcation value $\mu=\mu_0$.
    \item If $\left\{\begin{aligned}
            w^TF_\mu(x_0,\mu_0)&=0\\
            w^T(DF_\mu(x_0,\mu_0)v)&\neq0\\
            w^T(D^2F(x_0,\mu_0)(v,v))&\neq0\end{aligned}\right.,$ \\then the system experiences a \textbf{transcritical bifurcation} at the equilibrium point $x_0$ as the parameter $\mu$ passes through the bifurcation value $\mu=\mu_0$.
    \item If $\left\{\begin{aligned}
            w^TF_\mu(x_0,\mu_0)&=0\\
            w^T(DF_\mu(x_0,\mu_0)v)&\neq0\\
            w^T(D^2F(x_0,\mu_0)(v,v))&=0\\
            w^T(D^3F(x_0,\mu_0)(v,v,v))&\neq0\end{aligned}\right.,$ \\then the system experiences a \textbf{pitchfork bifurcation} at the equilibrium point $x_0$ as the parameter $\mu$ passes through the bifurcation value $\mu=\mu_0$. \\If $w^T(D^3F(x_0,\mu_0)(v,v,v))<0$, the branches occur for $\mu>\mu_0$, and the bifurcation is \textbf{supercritical}. Otherwise, the branches occur for $\mu<\mu_0$, and the bifurcation is \textbf{subcritical}.
\end{enumerate}
\end{theorem}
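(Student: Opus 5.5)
The plan is to reduce the $N$-dimensional problem $F(x,\mu)=0$ near $(x_0,\mu_0)$ to a single scalar equation by Lyapunov--Schmidt reduction, and then read off the three bifurcation types from the low-order Taylor coefficients of that scalar equation.

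\emph{Reduction.} Since $0$ is a simple eigenvalue of $J_F$, we have $\ker J_F=\mathrm{span}\{v\}$ and $\mathrm{Range}\,J_F=w^{\perp}$. Choose a complement $V$ of $\mathrm{span}\{v\}$ and let $Q$ be the projection onto $w^{\perp}$ along $\mathrm{span}\{J_F v'\}$, for some fixed $v'$ with $w^Tv'\neq 0$. Write $x=x_0+sv+y$ with $y\in V$. The equation $QF(x_0+sv+y,\mu)=0$ has a $y$-derivative at the origin equal to $QJ_F|_V$, which is an isomorphism $V\to w^{\perp}$. The Implicit Function Theorem therefore gives a smooth $y=Y(s,\mu)$ with $Y(0,\mu_0)=0$ and $Y_s(0,\mu_0)=0$. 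The solutions of $F=0$ near $(x_0,\mu_0)$ then correspond exactly to the zeros of
\[
g(s,\mu)=w^TF\bigl(x_0+sv+Y(s,\mu),\mu\bigr).
\]

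\emph{Taylor coefficients of $g$.} Differentiating and using $w^TJ_F=0$ repeatedly, I expect
\[
g(0,\mu_0)=0,\qquad g_s=0,\qquad g_\mu=w^TF_\mu,\qquad g_{ss}=w^TD^2F(v,v),
\]
and, when $g_\mu=0$,
\[
g_{s\mu}=w^T\bigl(DF_\mu v+D^2F(v,Y_\mu)\bigr),\qquad g_{sss}=w^TD^3F(v,v,v)+(\text{terms in }Y_{ss}).
\]
The terms in $Y_{ss}$ vanish when $g_{ss}=0$ and $Y_{ss}=0$, for instance under a symmetry.

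\emph{Case analysis.}
\begin{enumerate}
\item \emph{Fold.} If $g_\mu\neq 0$, the Implicit Function Theorem gives $\mu=\mu(s)$ with $\mu'(0)=0$ and $\mu''(0)=-g_{ss}/g_\mu\neq 0$. This is a fold: two solutions on one side of $\mu_0$ and none on the other.
\item \emph{Transcritical.} If $g_\mu=0$, the origin is a critical point of $g$. Assuming a known solution branch through $(x_0,\mu_0)$, we get $g_{\mu\mu}=0$, so the Hessian determinant of $g$ is $-g_{s\mu}^2<0$. The Morse lemma then gives two transversally crossing curves, which is a transcritical bifurcation.
\item \emph{Pitchfork.} If in addition $g_{ss}=0$ and $g_{sss}\neq 0$, factor $g=s\,\bigl(g_{s\mu}(\mu-\mu_0)+\tfrac16 g_{sss}s^2+\cdots\bigr)$. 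The nontrivial branch is $\mu-\mu_0\approx -g_{sss}s^2/(6g_{s\mu})$, and the side of $\mu_0$ on which it lies (super- versus subcritical) is determined by the sign of $g_{sss}/g_{s\mu}$.
\end{enumerate}

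\emph{Main obstacle.} The hard step is justifying the cross-derivative $g_{s\mu}$. The correction term $w^TD^2F(v,Y_\mu)$ vanishes only if $Y_\mu=0$, i.e.\ if $F_\mu(x_0,\mu_0)=0$, as happens on a trivial branch. The stated theorem suppresses this term and the $g_{\mu\mu}$ condition, so I would add the standing hypothesis $F_\mu(x_0,\mu_0)=0$ in cases 2 and 3, as in the normal-form setting of \cite{kuznetsov2004}. The same hypothesis makes the stated sign convention for super- versus subcritical meaningful, namely with $g_{s\mu}>0$ normalized. This hypothesis holds in the paper's application: at $r_I$ we have $F_\mu=[y_1,0]^T=0$, since $y_1=0$ along the equilateral family, which is exactly the trivial branch.
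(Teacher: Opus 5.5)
The paper does not prove this theorem. It quotes it from the literature, with a pointer to Kuznetsov and to Rusu et al. Your Lyapunov--Schmidt reduction is the standard argument, and your part~1 is correct.

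\textbf{Parts 2--3 as written, and why your repair falls short.} You are right that parts~2--3 cannot be proved as written. For $N=1$, $F(x,\mu)=x^2-x\mu+\mu^2$ satisfies every hypothesis of part~2, yet its zero set near the origin is the single point $(0,0)$.

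Your repair does not close the gap, for two reasons.
\begin{itemize}
\item That example already has $F_\mu(0,0)=0$, so the standing hypothesis $F_\mu(x_0,\mu_0)=0$ does not rescue it.
\item The inference ``a known branch through $(x_0,\mu_0)$ gives $g_{\mu\mu}=0$'' is false. If the branch has $s^{*\prime}(\mu_0)=\sigma$, differentiating $g(s^*(\mu),\mu)\equiv0$ twice gives $g_{\mu\mu}=-\sigma(g_{ss}\sigma+2g_{s\mu})$. For instance, $F=(x-\mu)^2$ has the branch $x=\mu$, with $F_\mu(0,0)=0$, $F_{x\mu}=-2$ and $F_{xx}=2$. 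Its Hessian is degenerate and there is no transcritical bifurcation.
\end{itemize}

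What your cases~2 and~3 actually use is the trivial-branch hypothesis $F(x_0,\mu)=0$ for all $\mu$ near $\mu_0$. You need it both for $g_{\mu\mu}=0$ and for the factorization $g=s\,(\cdots)$. By uniqueness in the implicit function theorem it gives $Y(0,\mu)\equiv0$. Hence $g(0,\mu)\equiv0$ and $g_{s\mu}=w^TDF_\mu v$. The nontrivial branch is then $\mu-\mu_0\approx-\frac{g_{ss}}{2g_{s\mu}}s$. This is where $g_{ss}\neq0$ enters case~2: it makes that branch transverse to $\{\mu=\mu_0\}$, which the Morse lemma alone does not give.

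\textbf{Case 3 needs one more hypothesis, even with a trivial branch.} You correctly found $g_{sss}=w^TD^3F(v,v,v)+3w^TD^2F(v,Y_{ss})$, where $J_FY_{ss}=-D^2F(v,v)$ once $g_{ss}=0$. However, you did not add the vanishing of the correction term to your hypotheses, and it is needed.

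Take $F(x_1,x_2,\mu)=(\mu x_1-x_1x_2+x_1^3,\,x_2-x_1^2)$ at the origin. It has a trivial branch, with $v=w=e_1$, $w^TDF_\mu v=1$, $w^TD^2F(v,v)=0$ and $w^TD^3F(v,v,v)=6$. Yet on $x_2=x_1^2$ the first component is just $x_1\mu$. So at $\mu=0$ there is a whole curve of equilibria and no pitchfork.

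Part~3 therefore needs one of the following:
\begin{itemize}
\item $w^TD^2F(v,Y_{ss})=0$, for example from $D^2F(x_0,\mu_0)(v,v)=0$ or a $\mathbb{Z}_2$-symmetry;
\item a restatement in terms of $g_{sss}$ instead of $w^TD^3F(v,v,v)$.
\end{itemize}
The super/subcritical rule also needs the normalization $w^TDF_\mu v>0$, because $w\mapsto-w$ flips the sign of $w^TD^3F(v,v,v)$. This normalization is a separate requirement and does not follow from $F_\mu=0$.

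\textbf{A minor slip.} $Q$ cannot project along $\mathrm{span}\{J_Fv'\}$, because $J_Fv'\in w^\perp$. Project along $\mathrm{span}\{u\}$ for any $u$ with $w^Tu\neq0$.

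\textbf{What the corrected argument covers.} With these fixes, your argument covers the two places where the paper itself applies the theorem. Part~1 handles the outer fold. The trivial-branch version of part~2 applies to the inner case, since $H_{inner}(r_I,\mu)=(\mu y_1(r_I)-x(r_I),\tilde g(r_I))\equiv0$.
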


%\newpage 

%\bibliographystyle{amsplain}
%\bibliography{4bp3+1.bib}
\end{document}